\documentclass[twoside,11pt]{article}

%


%
%

\usepackage[latin1]{inputenc}
\usepackage[T1]{fontenc}

\usepackage{lipsum} 

\usepackage[english]{babel}
\selectlanguage{english}

\usepackage{lmodern}
\rmfamily
\DeclareFontShape{T1}{lmr}{b}{sc}{<->ssub*cmr/bx/sc}{}
\DeclareFontShape{T1}{lmr}{bx}{sc}{<->ssub*cmr/bx/sc}{}

\usepackage{amssymb,amsmath,amsthm,amscd}
\usepackage{mathrsfs}

\usepackage{subfig}

\usepackage{tabularx} 
\usepackage{calc}
\usepackage{graphicx} 
\usepackage[nottoc]{tocbibind}

\usepackage{amsmath,amssymb}
\usepackage[usenames]{xcolor}
\usepackage{bbm}
\usepackage{dsfont}
\usepackage{mathrsfs}

\usepackage{varioref}
\usepackage[unicode,bookmarks,colorlinks=true,citecolor=blue,linkcolor=blue]{hyperref}
\usepackage[all]{hypcap}
\usepackage{cleveref}
\usepackage{hyperref}




\newtheorem{prop}{Proposition}

\crefname{prop}{Proposition}{Propositions}
\crefname{lem}{Lemma}{Lemmas}
\crefname{equation}{equation}{equations}
\crefname{cor}{Corollary}{Corollaries}

\DeclareMathOperator{\Vect}{\mathbf{span}}
\DeclareMathOperator{\cspan}{\overline{\mathbf{span}}}
\DeclareMathOperator{\Kern}{\mathbf{ker}}
\DeclareMathOperator{\im}{\mathbf{Im}}

\DeclareMathOperator{\supp}{\mathrm{supp}}




\begin{document}

\title{Kernel Spectral Clustering}

\author{Ilaria Giulini\footnote{The results presented in this paper 
were obtained while the author was preparing her PhD under the 
supervision of Olivier Catoni at the D\'epartement de Math\'ematiques et Applications, \'Ecole Normale Sup\'erieure, Paris, with the financial support of the 
R\'egion \^Ile de France.}\\
 ilaria.giulini@me.com} 

\maketitle

\begin{abstract}
We investigate the question of studying spectral clustering in a Hilbert space where the set of points to cluster
are drawn i.i.d. according to an unknown probability distribution  
whose support is a union of compact connected components. 
We modify the algorithm proposed by Ng, Jordan and Weiss in \cite{NJW} in order to propose 
a new algorithm that automatically estimates the number of clusters and we 
characterize the convergence of this new algorithm in terms of convergence of Gram operators.
We also give a hint of how this approach may lead to learn transformation-invariant 
representations in the context of image classification.
\end{abstract}

{\bf Keywords.}
Spectral clustering, Reproducing kernel Hilbert space, Markov chain, Gram operator

\section{Introduction}

Clustering is the task of grouping a set of objects into classes, called {\it clusters}, in such a way that objects in the same group are more similar to each other than to those in other groups. 
Spectral clustering algorithms are efficiently used 
as an alternative to classical clustering algorithms, such as $k$-means, 
in particular in the case of 
not linearly separable data sets.
To perform clustering, these methods use the spectrum of some data-dependent matrices: the affinity matrix \cite{DonathHoffman}, or the Laplacian matrix \cite{MFied}.
Many different versions of spectral clustering algorithms can be found in the literature (see \cite{vL})
and one of the most successful algorithm 
has been proposed by Ng, Jordan and Weiss in \cite{NJW}. 
Given a set of points to cluster into $c$ classes and denoting by 
$A$ the affinity matrix and by 
$D$ the diagonal matrix whose $i$-th entry is the sum of the elements on the $i$-th row of $A$, their algorithm uses  
the $c$ largest eigenvectors of the Laplacian matrix $D^{-1/2}AD^{-1/2}$ simultaneously. 
More precisely, the data set is first embedded in a $c$-dimensional space
in which clusters are more evident 
and then points are separated using $k$-means, or any other standard algorithm. \\
Other algorithms use different renormalizations of the affinity matrix. 
For example, Shi and Malik in \cite{ShiMalik} use the second smallest eigenvector of the unnormalized Laplacian matrix $D -A$
to separate the data set into two groups and use the algorithm recursively to get more than two classes, whereas 
Meila and Shi in \cite{MeS} use the first $c$ largest eigenvectors of the stochastic matrix $D^{-1}A$
(that has the same eigenvectors as the normalized Laplacian 
matrix $\mathrm I - D^{-1}A$) to compute a partition in $c$ classes. 
These algorithms treat the question as a graph partitioning problem and they are based on the so called normalized cut criterion.
Different cost functions can be found in the literature (see \cite{CSTK_cl}, \cite{BJ})
and more generally
the study of Laplacian matrices has been carry out also in different contexts, 
as for semi-supervised learning \cite{CSZ} and manifold learning \cite{BNy}.
\\[1mm]
We consider the setting of performing spectral clustering in a Hilbert space. 
This general framework includes the analysis of both data sets in a functional space
and samples embedded in a reproducing kernel Hilbert space. The latter is the case of 
kernel methods that use the kernel trick to embed the data set into a reproducing kernel Hilbert space 
in order to get a new representation that simplifies the geometry of the classes. 
Our point of departure is the algorithm proposed by Ng, Jordan and Weiss \cite{NJW} 
but interpreted in a infinite-dimensional setting, so that we view the matrices described above as 
empirical versions of some underlying integral operators. 
We assume that the points to cluster are drawn according to an unknown probability distribution 
whose support is a union of compact connected components (see \cite{vLBB} for consistency of clustering algorithms). 
Our idea is to view spectral clustering as a change of representation in a reproducing
kernel Hilbert space, induced by a change of kernel, and to propose a new algorithm 
that automatically estimates the number of clusters. Usually the number of clusters is assumed to be
know in advance (see \cite{MeS}, \cite{NJW}) 
or it is linked to the presence of a sufficient large gap in the spectrum of the Laplacian matrix. 
To achieve our aim we 
replace the projection on the space spanned by the 
largest eigenvectors of the Laplacian matrix proposed by Ng, Jordan and Weiss 
with a suitable power of the Laplacian operator.
Indeed, such an iteration performs some kind of soft truncation of the eigenvalues 
and hence leads to a natural dimensionality reduction.
This iteration is related to the computation of the 
marginal distribution at time $t$ of some Markov chains with exponential 
rare transitions, as suggested by \cite{Catoni01}
in the case where the unknown probability distribution has a finite support.  
We conjecture (and this will be the subject of a future work) that the same kind of behavior holds for more general supports 
and we hint that spectral clustering, coupled with some preliminary change of representation in a reproducing kernel Hilbert space, can be a winning tool to bring down the representation of classes to a low-dimensional space
and may lead to a generic and rather radical alternative. 
This suggests that the kernel trick, introduced to better separate classes in the supervised learning framework addressed by
support vector machines (SVMs), is also feasible in the unsupervised context, where we do not separate classes using hyperplanes in the feature 
space but instead we use spectral clustering 
to perform the classification.
We also suggest with an example how this approach may lead to learn transformation-invariant representations
of a same pattern. \\
Developing a convincing toolbox for unsupervised invariant-shape analysis is beyond the scope of this study and it will be carried on elsewhere. 
However we observe that the pattern transformations we would like to take into account in image analysis are numerous and not easy to formalize:
they may come from some set of transformations such as translations, rotations or scaling or they may come 
from the conditions in which the images have been taken, for example,
changes in the perspective or in illumination, partial occlusions, object deformations, etc. 
Making a representation invariant with respect to a set of transformations is a challenging task even in the simpler case of translations. 
Indeed, it is sufficient to observe that the set of functions obtained by translating a single pattern in various directions typically 
spans a vector space of high dimension, meaning that the shapes (here the functions) 
that we would like to put in the same category do not even live in a common low-dimensional subspace.
A possible approach is to study representations that leave invariant some group of transformations, for instance, the Fourier transform that has translation invariant properties, since its modulus is translation invariant. However it is unstable to small deformations at high frequencies. Wavelet transforms provide a workaround. Scattering representations proposed by Mallat \cite{Mallat} compute translation-invariant representations by cascading wavelet transforms and modulus pooling operators. They bring improvements for audio \cite{Mallat_Anden} and for image \cite{Mallat_Bruna} classification. 
This kind of careful mathematical study has to be repeated for any kind of transformations. 
Instead of deriving the representation of a pattern from a mathematical study, the idea here is to learn the representation itself from examples of patterns that sample in a sufficiently dense way the orbits of the set of transformations at stake.\\[1mm]
The paper is organized as follows. 
In Section~\ref{sec1} we introduce the ideal version of our algorithm that uses the underlying unknown probability distribution
and we provide an interpretation of the clustering effect in terms of Markov chains.  
In Section~\ref{sec2} we introduce an empirical version of the algorithm and we provide some convergence results, based 
on the convergence of some Gram operators.
Finally experiments are shown in Section~\ref{sec3}.

\section{The Ideal Algorithm}\label{sec1}
Let $\mathcal X$ be a compact subset of some separable Hilbert space endowed 
with the (unknown) probability distribution $\mathrm P$ on $\mathcal  X$ 
and assume that the support of $\mathrm{P}$ is made of several compact connected components.
Let $A : \mathcal  X \times \mathcal  X \to \mathbb R$ be a symmetric positive semi-definite 
kernel, normalized in such a way that $A(x,x) = 1$, for any $x$ in the state
space $\mathcal{  X}$, and let us define the symmetric positive semi-definite kernel
\[
K(x,y)= A(x,y)^2. 
\] 
This kernel plays the same role of the affinity matrix, while the Laplacian matrix is 
now replaced by the integral operator with kernel
\[
\overline K (x,y) = \mu(x)^{-1/2} \ K(x,y) \ \mu(y)^{-1/2}
\]
where the function 
\begin{equation}
\label{eq4.6}
\mu(x) = \int K(x,z) \mathrm d \mathrm P(z)
\end{equation}
is the analogous of the diagonal matrix $D$. According to the Ng, Jordan and Weiss algorithm 
we consider more specifically the case where
\[
K(x,y) = K_{\beta}(x,y) = \exp \bigl( - \beta \lVert x - y \rVert^2 \bigr), \quad \beta>0.
\]

\subsection{The Algorithm}
The ideal algorithm (that uses the unknown probability distribution $\mathrm P$) goes as follows. For any $x, y \in \mathcal  X,$

\begin{enumerate}
\item Form the kernel 
\[
\overline K(x,y) =  \mu(x)^{-1/2} K(x,y)\mu(y)^{-1/2}
\]
where $\mu(x) = \int K(x,z) \mathrm d \mathrm P(z)$ 

\item Construct the new (iterated) kernel 
\[
\overline K_m(x,y) = \int \overline K(y,z_1) \overline K(z_1, z_2) \dots \overline K(z_{m-1},x) \  \mathrm d \mathrm P^{\otimes (m-1)}(z_1, \dots ,z_{m-1}),
\]
 where $m >0 $ is a free parameter
 
\item Make a last change of kernel (normalizing the kernel $\overline K_m$) 
\[
K_m(x,y) =\overline K_{2m}(x,x)^{-1/2} \ \overline K_{2m}(x,y) \ \overline K_{2m}(y,y)^{-1/2}
\]

\item Cluster points according to the new representation defined 
by the kernel $K_m$. 
\end{enumerate}

\vskip1mm
\noindent
As it is suggested in the next section, the representation induced by the kernel $K_m$
makes the subsequent classification an easy task.

\subsection{The Clustering Effect}

The main idea is that, in the Hilbert space defined by the kernel $K_m$, clusters are concentrated around  
orthogonal unit vectors, forming the vertices of a regular simplex. 
To give an intuition of this clustering effect, we provide a Markov chain analysis of the algorithm, according to some ideas 
suggested by \cite{Catoni01}. 
\\[1mm]
Assume that the scale parameter $\beta$ in the Gaussian kernel $K$ is large enough. For the choice of $\beta$ we refer to Section~\ref{choice_beta}.\\
Define the kernel 
\[
M(x,y) = \mu(x)^{-1} K(x,y)
\]
where $\mu$ is defined in ~\cref{eq4.6} and consider the corresponding integral operator 
\[
\mathbf{M}(f): x \mapsto \int M(x,z) f(z) \, \mathrm{d}\mathrm{P}(z).
\]
Observe that $\mathbf{M}$ is the transition operator 
of a Markov chain $(Z_k)_{k \in \mathbb{N}}$ on $\mathcal{X}$ and 
that
\[
M(x,y) = \frac{\mathrm{d} 
\mathbb{P}_{Z_1 | Z_0 = x}}{
\mathrm{d} \mathrm{P}}(y).
\] 
So the Markov chain related to the operator $\mathbf{M}$ 
has a small probability to jump from one component to another one.
As already said, from ~\cite{Catoni01} it can be deduced that, when the support of $\mathrm P$
is finite, 
for suitable values of $m$, depending on $\beta$ as $\exp(\beta T^2)$, 
 the measure 
\[
\mathbf{M}_x^m: f \mapsto \mathbf{ M}^m(f)(x)
\]
is close (for large enough values of $\beta$) to be supported by 
a cycle of depth larger than $H$ of the state space. 
The cycle decomposition of the state space of a Markov 
chain with exponential transitions is some discrete counterpart 
of the decomposition into connected components. 
More precisely, the measure $\mathbf{M}_x^m$ is close to 
the invariant measure of the operator $\mathbf{M}$ restricted 
to the connected component which $x$ belongs to.  
As a result, the function $x \mapsto \mathbf{M}^m (f)(x) $ is approximately constant on each connected component.
Thus, for reasons that we will not try to prove here, but that 
are confirmed by experiments, we 
expect the same kind of behaviour in the general setting. 
To rewrite this conjecture in terms of the kernel $K_m$ we introduce the following proposition.

\begin{prop}\label{lem_3rep} 
Let  $\mathrm{Q}$ be the invariant measure of the Markov transition 
operator $\mathbf{M}$ and define 
\[
\mathfrak{R}_x = \mu(x)^{1/2} \frac{\mathrm{d}\mathbf{M}^m_x}{\mathrm{d}\mathrm{Q}} \in L^2_{\mathrm{Q}}, \qquad x\in \mathcal X.
\]
It holds that 
\[
K_m(x,y) = \Bigl\langle \frac{\mathfrak{R}_x }{\lVert \mathfrak{R}_x \rVert_{L^2_\mathrm{Q}}},  \frac{\mathfrak{R}_y }{\lVert \mathfrak{R}_y \rVert_{L^2_\mathrm{Q}}}  \Bigr\rangle_{L^2_\mathrm{Q}}.
\]
\end{prop}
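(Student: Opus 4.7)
The plan is to unwind every definition, identify $\mathrm Q$ explicitly, and reduce the identity to a Radon--Nikodym computation together with the semigroup property of $\overline K$. First I would pin down the invariant measure. Because $K$ is symmetric, the Markov transition kernel $M(x,y) = \mu(x)^{-1} K(x,y)$ satisfies the detailed balance relation $\mu(x)M(x,y) = K(x,y) = \mu(y)M(y,x)$, so $\mu \, d\mathrm P$ is $\mathbf M$-reversible. Hence $\mathrm Q$ is proportional to $\mu\,d\mathrm P$: there is a constant $c>0$ with $d\mathrm Q = c\, \mu \, d\mathrm P$. (The value of $c$ is irrelevant since it will cancel in the normalized inner product.)

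Second, I would relate the iterated kernels $\overline K_m$ and $M_m$, where $M_m$ denotes the kernel of $\mathbf M^m$ with respect to $\mathrm P$. The basic factorization $\overline K(x,y) = \mu(x)^{-1/2} K(x,y) \mu(y)^{-1/2} = \mu(x)^{1/2} M(x,y) \mu(y)^{-1/2}$ propagates through convolution: a one-line induction on $m$ gives
\[
\overline K_m(x,y) = \mu(x)^{1/2} \, M_m(x,y) \, \mu(y)^{-1/2}.
\]
Combined with the obvious semigroup identity $\int \overline K_m(x,z)\, \overline K_m(z,y)\, d\mathrm P(z) = \overline K_{2m}(x,y)$ (which follows from symmetry of $\overline K$ and the definition of $\overline K_m$), this is essentially all the structural input needed.

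Third, I would compute the Radon--Nikodym derivative directly. Since $\mathbf M^m_x$, viewed as a measure, has density $M_m(x,\cdot)$ with respect to $\mathrm P$, we obtain $\frac{d\mathbf M^m_x}{d\mathrm Q}(y) = c^{-1} M_m(x,y)/\mu(y)$, so
\[
\mathfrak R_x(y) = c^{-1} \mu(x)^{1/2} \, \frac{M_m(x,y)}{\mu(y)}.
\]
Then I would plug this into $\langle \mathfrak R_x, \mathfrak R_y\rangle_{L^2_\mathrm Q} = \int \mathfrak R_x(z) \mathfrak R_y(z)\, c\,\mu(z)\, d\mathrm P(z)$, substitute $M_m = \mu(\cdot)^{-1/2} \overline K_m(\cdot,\cdot)\mu(\cdot)^{1/2}$ using the induction from step two, and watch the $\mu(z)$ factors collapse. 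The result is $\langle \mathfrak R_x, \mathfrak R_y\rangle_{L^2_\mathrm Q} = c^{-1} \overline K_{2m}(x,y)$, and in particular $\lVert \mathfrak R_x \rVert^2_{L^2_\mathrm Q} = c^{-1} \overline K_{2m}(x,x)$. The constant $c^{-1}$ cancels in the normalized inner product, leaving exactly $\overline K_{2m}(x,x)^{-1/2}\overline K_{2m}(x,y)\overline K_{2m}(y,y)^{-1/2} = K_m(x,y)$.

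There is no genuine obstacle here; the statement is essentially an algebraic reshuffling once one notices that $\mathrm Q$ must be the $\mu$-weighting of $\mathrm P$. The only place to be careful is the bookkeeping of the factors $\mu(x)^{\pm 1/2}$ and the normalization constant $c$: getting them aligned so that the final expression is the cosine of the angle between $\mathfrak R_x$ and $\mathfrak R_y$, rather than some extra scalar multiple of it.
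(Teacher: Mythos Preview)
Your proof is correct and follows essentially the same route as the paper: identify $\mathrm Q$ as $\mu\,\mathrm d\mathrm P$ (the paper checks invariance directly, you use detailed balance, which is the same computation), prove by induction the conjugacy $\overline K_m(x,y)=\mu(x)^{1/2}M_m(x,y)\mu(y)^{-1/2}$, and then read off the Radon--Nikodym derivative and the $L^2_{\mathrm Q}$ inner product. The only difference is cosmetic---you carry the normalization constant $c$ throughout, whereas the paper simply writes $\mathrm d\mathrm Q/\mathrm d\mathrm P=\mu$ and lets any constant be absorbed in the final normalization.
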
 

\begin{proof} 
Using the fact that $\overline K= \overline K_1$ and that 
\begin{equation}\label{kmkm}
\overline K_m(x,y) = \int \overline K(x,z) \overline K_{m-1}(z,y) \, \mathrm d \mathrm P(z),
\end{equation}
by induction we get
\[ 
\mathbf{M}^m(f)(x) = \mu(x)^{-1/2} \int \overline K_m(x,z) \mu(z)^{1/2} f(z) \, \mathrm{d}\mathrm{P}(z).
\] 
Thus the measure $\mathbf{M}^m_x$ has density
\[ 
\frac{\mathrm{d}\mathbf{M}^m_x}{\mathrm{d}\mathrm{P}} (y) = \mu(x)^{-1/2} \overline K_m(x,y)\ \mu(y)^{1/2}.
\] 
Moreover, since 
\[
\int \mu(x) M(x,y) f(y) \, \mathrm{d}\mathrm{P}(x) \mathrm{d}\mathrm{P}(y) = 
\int K(x,y) f(y) \, \mathrm{d}\mathrm{P}(x) \mathrm{d}\mathrm{P}(y) = \int 
\mu(y) f(y) \, \mathrm{d}\mathrm{P}(y),
\]
the invariant measure $\mathrm{Q}$ has a density 
with respect to $\mathrm{P}$ equal to $\frac{\mathrm{d}\mathrm{Q}}{\mathrm{d}\mathrm{P}} = \mu.$
As a consequence
\[ 
\frac{\mathrm{d}\mathbf{M}^m_x}{\mathrm{d}\mathrm{Q}}(y)= \mu(x)^{-1/2} \overline K_m(x,y)\ \mu(y)^{-1/2}.
\] 
According to ~\cref{kmkm} and recalling the definition of $K_m$ we conclude the proof.
$\blacksquare$
\end{proof}

\vskip2mm
\noindent
With these definitions, 
our conjecture can be formulated as 
\[
\lim_{{\beta } \to \infty} K_{\exp({\beta} T^2)} (x,y)  = \sum_{C \in \mathcal C_T} \mathbbm 1\left(\{ x,y\} \subset C\right),
\]
where $\mathcal C_T$ denotes the connected components of the graph $\left\{ x,y \in \mathrm{supp}(\mathrm P) \ | \ \| y-x\| <T \right\}.$ 
Remark that, as we assumed that the support of $\mathrm{P}$ is a union of 
compact topological connected components, taking $T$ to be less than 
the minimum distance between two topological components of $\supp (\mathrm{P})$, 
we obtain that $\mathcal{C}_T$ coincides with the topological components of 
$\supp (\mathrm{P})$. 
As a consequence, for suitable values of the scale parameter $\beta$ and of the number of iterations $m$,
the kernel $K_m(x,y)$ is close to zero (or equivalently the supports of the probability measures $\mathbf{M}^m_x$ and $\mathbf{M}^m_y$ are almost disjoint) 
if $x$ and $y$ belong to two different clusters, 
whereas it is close to one (or equivalently the supports of $\mathbf{M}^m_x$ and $\mathbf{M}^m_y$ are almost the same) when $x$ and $y$ belong to the same cluster. 
Moreover since the kernel $K_m(x,y)$ is the cosine of the angle formed by the two vectors representing $x$ and $y$,
according to the conjecture, it is either close to zero or close to one,
showing that, in the Hilbert space defined by $K_m$, clusters are concentrated around  
orthogonal unit vectors.

\subsection{Choice of the Scale Parameter}\label{choice_beta}

We conclude this section with some remarks on the choice of the scale parameter $\beta.$
In the case where the influence kernel $K$ is of the form 
\[ 
K(x, y) = K_{\beta}(x, y) = \exp \bigl( - \beta \lVert x - y \rVert^2 \bigr),
\] 
we propose to choose $\beta$ as the solution of the equation 
\[
F(\beta) := \int K_{\beta}(x, y)^2 \, \mathrm{d}\mathrm{P}(x)  \mathrm{d}\mathrm{P}(y) =  h
\] 
where $h$ is a suitable parameter which measures 
the probability that two independent points drawn according to the probability $\mathrm{P}$ 
are close to each other.
Introducing the Gram operator $L_\beta : L^2_{\mathrm{P}} \to L^2_{\mathrm{P}}$ defined by the kernel $K_{\beta}$ as
\[
L_{\beta}(f)(x)  = \int K_{\beta}(x, y) f(y) \, \mathrm{d}\mathrm{P}(y), \quad  x \in \mathcal{  X},
\]
the parameter $h$ is equal to the square of the Hilbert-Schmidt norm of $L_{\beta}.$
Observe that $L_\beta$ has a discrete spectrum $\lambda_1 \geq 
\lambda_2 \geq \dots \geq 0$ that satisfies, according to the Mercer theorem, 
\begin{align*}
\sum_{i=1}^{+ \infty} \lambda_i & = \int K_\beta(x, x) \, \mathrm{d}\mathrm{P}(x) = 1,\\ 
\sum_{i=1}^{+ \infty} \lambda_i^2  & = F(\beta) \leq 1
\end{align*}
since $K_{\beta}(x,x) = 1$. We also observe that
\[
\lim_{\beta \rightarrow 0} F(\beta) = 1
\]
which implies that
for $\beta$ going to zero, $\lambda_1$ goes to one whereas all the other eigenvalues $\lambda_i$, for $i \geq 2$, go to zero.
Moreover, 
\[
\lim_{\beta \rightarrow + \infty} F(\beta) = 0, 
\]
so that when $\beta$ grows, the eigenvalues are spread more and more 
widely. 
Therefore, $F(\beta)$ governs the spread of the eigenvalues of $L_{\beta}$. 
For these reasons, the value of the parameter $h = F(\beta)$ 
controls the effective dimension of the representation of the distribution of points $\mathrm{P}$ 
in the reproducing kernel Hilbert space defined by 
$K_{\beta}$. Experiments show that this effective dimension has 
to be pretty large in order to have a proper clustering effect, meaning that we will impose a small value of the 
parameter $h$.  \\[1mm]
Note that in the experiments we do not have access to the function $F(\beta)$ but we have to consider 
its empirical version
\[
\frac{1}{n(n-1)} \sum_{1\leq i<j\leq n} K_\beta(X_i, X_j)^2
\]
where $X_1, \dots,  X_n$ is an i.i.d. sample drawn from the 
probability distribution $\mathrm{P}$.

\section{The Empirical Algorithm}\label{sec2}

In order to provide an empirical version of the ideal algorithm proposed in Section~\ref{sec1},
we connect the previous kernels with some Gram operators. 
Note that by {\it empirical algorithm} we mean an algorithm 
based on the sample distribution instead of on the unknown probability distribution $\mathrm{P}$.\\[1mm]
From now on, given a Hilbert space $\mathcal K$ and a probability distribution $\mathcal P$ 
on $\mathcal K$,
the Gram operator $\mathfrak G: \mathcal K\to \mathcal K$ is defined as 
\[
\mathfrak G u = \mathbb E_{Y \sim \mathcal P} \big[\langle u, Y\rangle_{\mathcal K} \ Y  \big].
\]

\vskip1mm
\noindent
The next proposition links the function $\mu$ defined in ~\cref{eq4.6} to the Gram operator defined in the reproducing kernel
Hilbert space induced by the kernel
\[
A(x,y) = K(x,y)^{1/2} = \exp\left( -\beta\| x-y\|^2/2  \right).
\]

\begin{prop}\label{gA}
Let $\mathcal H_A$ be the reproducing kernel Hilbert space defined by $A$ and $\phi_A: \mathcal X \to \mathcal H_A$ the corresponding feature map.
Denote by $\mathcal G_A: \mathcal H_A \to \mathcal H_A$ the Gram operator
\[
\mathcal G_A v = \int \langle v, \phi_A(z)\rangle_{\mathcal H_A} \ \phi_A(z) \, \mathrm d \mathrm P(z).
\]
Then 
\[
\mu(x) = \int \langle  \phi_A(x), \phi_A(z)\rangle_{\mathcal H_A}^2 \, \mathrm d \mathrm P(z) = \langle \mathcal G_A \phi_A(x), \phi_A(x) \rangle_{\mathcal H_A}.
\]
\end{prop}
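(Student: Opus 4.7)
The plan is to chain together three elementary identities: the defining relation $K = A^2$, the reproducing property of $\mathcal H_A$, and the definition of the Gram operator $\mathcal G_A$.

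First I would rewrite $\mu(x)$ using the hypothesis $K(x,y) = A(x,y)^2$, which is built into the setup of the paper (recall $A(x,y) = \exp(-\beta\lVert x-y\rVert^2/2)$, so $K = A^2$). This turns $\mu(x) = \int K(x,z)\,\mathrm d\mathrm P(z)$ into $\mu(x) = \int A(x,z)^2\,\mathrm d\mathrm P(z)$. Then by the reproducing property of the kernel $A$, namely $A(x,z) = \langle \phi_A(x),\phi_A(z)\rangle_{\mathcal H_A}$, the first equality in the statement is immediate.

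For the second equality, I would start from the right-hand side and use the defining expression of $\mathcal G_A$ with $v = \phi_A(x)$:
\[
\langle \mathcal G_A \phi_A(x),\phi_A(x)\rangle_{\mathcal H_A} = \Bigl\langle \int \langle \phi_A(x),\phi_A(z)\rangle_{\mathcal H_A}\,\phi_A(z)\,\mathrm d\mathrm P(z),\,\phi_A(x)\Bigr\rangle_{\mathcal H_A}.
\]
Pulling the inner product under the integral (which is justified by the Bochner-integral definition of $\mathcal G_A$, or equivalently by viewing the expression as a weak integral) gives $\int \langle \phi_A(x),\phi_A(z)\rangle_{\mathcal H_A}^2\,\mathrm d\mathrm P(z)$, which matches the middle expression.

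There is no serious obstacle; the only thing worth saying explicitly is that $\mathcal G_A$ is well defined as a trace-class operator because $\int \lVert \phi_A(z)\rVert_{\mathcal H_A}^2\,\mathrm d\mathrm P(z) = \int A(z,z)\,\mathrm d\mathrm P(z) = 1 < \infty$, so that both the integral defining $\mathcal G_A v$ and the commutation of integral and inner product are legitimate. The whole statement is therefore a two-line consequence of the reproducing property together with the symmetry $\langle u,v\rangle = \langle v,u\rangle$ in a real Hilbert space.
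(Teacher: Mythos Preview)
Your proof is correct and follows essentially the same route as the paper: both rely on the reproducing property $A(x,y)=\langle \phi_A(x),\phi_A(y)\rangle_{\mathcal H_A}$ from the Moore--Aronszajn theorem together with $K=A^2$ and the definition of $\mathcal G_A$. Your added remark on the well-definedness of $\mathcal G_A$ and the commutation of the integral with the inner product is a harmless bit of extra care that the paper leaves implicit.
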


\begin{proof}
The result follows recalling that by the Moore-Aronszajn theorem
\[
A(x,y) = \langle \phi_A(x), \phi_A(y) \rangle_{\mathcal H_A}.
\]
$\blacksquare$
\end{proof}

\noindent
A similar result relates the iterated kernel $\overline K_m$ to the Gram operator
defined in the reproducing kernel Hilbert space induced by the Gaussian kernel $K$.

\begin{prop}\label{prop_Km}
Let $\mathcal H$ be the reproducing kernel Hilbert space defined by $K$ and $\phi_K: \mathcal X \to \mathcal H$ the corresponding feature map.
Define
\begin{equation}
\label{phibar}
\phi_{\overline K}(x) = \mu(x)^{-1/2} \phi_{K}(x)
\end{equation}
and consider the Gram operator $\mathcal G: \mathcal H \to \mathcal H$ 
\begin{equation}\label{defGop}
\mathcal G v =  \int \langle v, \phi_{\overline K}(z) \rangle_{\mathcal H}\  \phi_{\overline K}(z) \ \mathrm d \mathrm P(z).
\end{equation}
Then, for any $m>0,$ 
\[
\overline K_{2m}(x,y)= \langle \mathcal{  G}^{m-1/2} \phi_{\overline K}(x),\mathcal{  G}^{m-1/2} \phi_{\overline K}(y)\rangle_{\mathcal H}.
\]
\end{prop}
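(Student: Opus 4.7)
The plan is to reduce everything to the reproducing kernel identity $K(x,y) = \langle \phi_K(x), \phi_K(y)\rangle_{\mathcal H}$, which via \cref{phibar} gives the crucial identity
\[
\overline K(x,y) = \mu(x)^{-1/2} K(x,y) \mu(y)^{-1/2} = \langle \phi_{\overline K}(x), \phi_{\overline K}(y)\rangle_{\mathcal H}.
\]
So $\overline K$ itself is already a ``reproducing kernel'' expression, just with the rescaled feature map $\phi_{\overline K}$. The Gram operator $\mathcal G$ in \cref{defGop} is the covariance of $\phi_{\overline K}(Z)$ when $Z \sim \mathrm P$; in particular it is self-adjoint and positive, so arbitrary real powers $\mathcal G^s$ with $s \geq 0$ are defined through functional calculus.

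The core step is to prove by induction on $m \geq 1$ the identity
\[
\overline K_m(x,y) = \langle \phi_{\overline K}(x), \mathcal G^{m-1} \phi_{\overline K}(y)\rangle_{\mathcal H}.
\]
The base case $m=1$ is exactly the rewriting of $\overline K$ above. For the inductive step I would use the recursion \cref{kmkm} to write
\[
\overline K_{m+1}(x,y) = \int \overline K(x,z) \overline K_m(z,y) \, \mathrm d \mathrm P(z) = \int \langle \phi_{\overline K}(x), \phi_{\overline K}(z)\rangle_{\mathcal H} \; \langle \phi_{\overline K}(z), \mathcal G^{m-1} \phi_{\overline K}(y)\rangle_{\mathcal H} \, \mathrm d \mathrm P(z),
\]
pull $\phi_{\overline K}(x)$ out of the integral by linearity of the inner product, and recognize the remaining integral as the definition of $\mathcal G$ applied to $\mathcal G^{m-1} \phi_{\overline K}(y)$, which gives $\mathcal G^m \phi_{\overline K}(y)$ by self-adjointness.

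Once the identity for $\overline K_m$ is established, setting $m \mapsto 2m$ and splitting the odd power $\mathcal G^{2m-1} = \mathcal G^{m-1/2} \mathcal G^{m-1/2}$ using the (self-adjoint) functional calculus for $\mathcal G$ yields
\[
\overline K_{2m}(x,y) = \langle \phi_{\overline K}(x), \mathcal G^{2m-1} \phi_{\overline K}(y)\rangle_{\mathcal H} = \langle \mathcal G^{m-1/2}\phi_{\overline K}(x), \mathcal G^{m-1/2}\phi_{\overline K}(y)\rangle_{\mathcal H},
\]
which is the claim.

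The only genuine subtlety is justifying the interchange of inner product and integral in the inductive step (so that the integral can be interpreted as the application of $\mathcal G$ to a vector in $\mathcal H$). This follows from $A$, and hence $K = A^2$, being continuous and bounded on the compact set $\mathcal X$, so that $\mu$ is bounded and bounded below away from zero on $\supp(\mathrm P)$, the map $z \mapsto \phi_{\overline K}(z)$ is Bochner-integrable against $\mathrm P$ in $\mathcal H$, and the vector-valued integrals defining $\mathcal G$ and its iterates exist in norm. After this integrability check, the argument is purely algebraic.
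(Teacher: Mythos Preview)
Your proof is correct and follows essentially the same route as the paper: rewrite $\overline K$ via the rescaled feature map $\phi_{\overline K}$, prove $\overline K_m(x,y)=\langle \mathcal G^{m-1}\phi_{\overline K}(x),\phi_{\overline K}(y)\rangle_{\mathcal H}$ by induction on the recursion \cref{kmkm}, and then split $\mathcal G^{2m-1}$ using that $\mathcal G$ is positive and self-adjoint. You are somewhat more explicit than the paper about the Bochner-integrability and functional-calculus points, but the argument is the same.
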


\begin{proof}
According to the Moore-Aronszajn theorem
\[
K(x,y) = \langle \phi_K(y), \phi_K(x) \rangle_{\mathcal H}
\]
and thus, by definition, the kernel $\overline K(x,y) =  \mu(x)^{-1/2} \ K(x,y) \ \mu(y)^{-1/2}$ can be written as
\[
\overline K(x,y) = \langle \mu(x)^{-1/2} \phi_{K}(x), \mu(y)^{-1/2} \phi_{K}(y) \rangle_{\mathcal H} = \langle \phi_{\overline K}(y), \phi_{\overline K}(x) \rangle_{\mathcal H}.
\]
Using the fact that $\overline K= \overline K_1$ and that
\[
\overline K_m(x,y) = \int \overline K(x,z) \overline K_{m-1}(z,y) \, \mathrm d \mathrm P(z),
\]
by induction we get that 
\[
\overline K_m(x,y)= \langle \mathcal G^{m-1} \phi_{\overline K}(x), \phi_{\overline K}(y)\rangle_{\mathcal H}.
\]
Since $\mathcal G$ is a positive symmetric operator we conclude the proof. 
$\blacksquare$
\end{proof}

\vskip2mm
\noindent
As a consequence, 
the representation of $x \in \mathcal{  X}$ defined by the renormalized 
kernel $K_m$ is isometric to the representation 
$\lVert \mathcal{  R}_x \rVert^{-1}_{\mathcal{  H}}
\mathcal{  R}_x \in \mathcal{  H}$, where
\[
\mathcal{  R}_x = \mathcal{  R}_x(m) = \mathcal{  G}^{m-1/2} \phi_{\overline K}(x) \in \im(\mathcal{  G}) \subset \mathcal{  H}. 
\]

\vskip2mm
\noindent
{\bf Remark 4.} {\it
The representation of $x$ in the kernel space 
defined by $K_m$ is also isometric to the representation 
$\lVert \mathbf{R}_x \rVert^{-1} \mathbf{R}_x \in L_{\mathrm{P}}^2$, 
where $\mathbf{R}_x \in L_{\mathrm{P}}^2$ is defined as 
\begin{equation}
\label{eq:rep}
\mathbf{R}_x(z) = \overline{K}_m(x,z) = 
\langle \mathcal{  G}^{m-1} \phi_{\overline K}(x), \phi_{\overline K}(z) \rangle_{\mathcal{  H}}.
\end{equation}
This is a consequence of the fact that 
\begin{align*}
\langle \mathbf{R}_x, \mathbf{R}_y \rangle_{L^2_{\mathrm P}} = \int \overline{K}_m(x,z) \overline{K}_m(z,y) \, \mathrm{d}\mathrm{P}(z) = \overline{K}_{2m}(x,y).
\end{align*}
}

\subsection{An Intermediate Step}\label{emp_algo}

As already said, by the Moore-Aronszajn theorem, the Gaussian kernel $K$ defines a reproducing kernel Hilbert space $\mathcal H$
and a feature map $\phi_K: \mathcal X \to \mathcal H$
such that
\[
K(x,y) = \langle \phi_K(y), \phi_K(x) \rangle_{\mathcal H}.
\]
We introduce an intermediate version of the algorithm, that uses the 
feature map $\phi_K$. Since this feature map is not explicit, we 
will afterward translate this description into an algorithm 
that manipulates only scalar products and not implicit feature 
maps. 
This intermediate step is useful to provide the convergence results presented in Section~\ref{cr}.\\[1mm]
Let $X_1, \dots, X_n$ be the set of points to cluster, that we assume to be drawn i.i.d. according to $\mathrm P$. 
The algorithm goes as follows.

\begin{enumerate}
\item Form the kernel 
\[
\widehat K(x,y) = \langle \phi_{\widehat K}(x) , \phi_{\widehat K}(y) \rangle_{\mathcal H} 
\]
where 
\[
\phi_{\widehat K}(x) =  \widehat \mu(x)^{-1/2}\phi_{K}(x)\qquad \text{and} \qquad
\widehat \mu(x)  = \frac{1}{n} \sum_{i=1}^n K(x,X_i)
\]

\item Construct the kernel 
\begin {equation}
\label{hatK_m}
\widehat K_m(x,y) = \langle \widehat{\mathcal Q}^{\frac{m-1}{2}} \phi_{\widehat K}(x),\widehat{\mathcal Q}^{\frac{m-1}{2}} \phi_{\widehat K}(y)\rangle_{\mathcal H}
\end{equation}
where
\[
\widehat{\mathcal Q} v = \frac{1}{n} \sum_{i=1}^n \widehat  \mu(X_i)^{-1} \langle v, \phi_{K}(X_i) \rangle_{\mathcal H}\  \phi_{K}(X_i).
\]
The definition of $\widehat{\mathcal Q} $ is justified in Remark 5.
 
\item Make a last change of kernel and consider
\[
H_m(x,y) = \widehat K_{2m}(x,x)^{-1/2} \ \widehat K_{2m}(x,y)\ \widehat K_{2m}(y,y)^{-1/2}
\]

\item Cluster points according to this new representation, by thresholding the distance between points.\\[1mm]

\end{enumerate}

\noindent
{\bf Remark 5.} 
{\it 
The construction of the estimator $\widehat {\mathcal Q}$ follows from a two-steps estimate
of the Gram operator $\mathcal G$ defined in ~\cref{defGop}. Indeed, according to the definition of the feature map $\phi_{\overline K}$, 
the Gram operator $\mathcal G$ rewrites as
\[
\mathcal G v = \int \mu(z)^{-1} \langle v, \phi_{K}(z) \rangle_{\mathcal H}\  \phi_{K}(z) \ \mathrm d \mathrm P(z).
\]
Thus first we replace the function $\mu$ with its estimator $\widehat \mu$ and then we replace 
the unknown distribution $\mathrm P$ with the sample distribution.
}

\subsection{Implementation of the Algorithm}\label{impa}

We now describe the algorithm used in the experiments. 
As in the previous section, let $X_1, \dots, X_n$ be the set of points to cluster, 
drawn i.i.d. according to $\mathrm P$. 

\begin{enumerate}
\item Construct for $i,j =1,\dots,n$
\[
K_{ij} = \frac{1}{n} \exp \bigl( - \beta \lVert X_i - X_j \rVert^2 \bigr),
\]
where the parameter $\beta$ is chosen, according to Section~\ref{choice_beta}, as the solution of
\[
\frac{1}{n(n-1)} \sum_{\substack{i,j\\i\neq j}} \exp \bigl( - 2 \beta \lVert X_i - X_j \rVert^2 \bigr) \simeq 0.005
\] 

\item Define the diagonal matrix $D = \mathrm{diag}(D_1, \dots, D_n)$ where
\[
D_i = \max \left\{ \frac{1}{n} \sum_{j=1}^n K_{ij}, \ \sigma\right\} \quad {\text{with}} \ \sigma= 0.001
\]

\item Form the matrix 
\[
M = D^{-1/2} K D^{-1/2} 
\]

\item Consider
\begin{equation}\label{eqC}
C_{ij} = (M^m)_{ii}^{-1/2}  (M^m)_{ij}(M^m)_{jj}^{-1/2}, \qquad i,j =1,\dots,n
\end{equation}

\vskip1mm
The choice of the number of iterations $m$ is done automatically
and it is described in Remark 6.

\item Cluster points according to the representation induced by $C$, as detailed in Remark 7.
\end{enumerate}

\vskip5mm
\noindent
Note that in some configurations the above algorithm obviously yields an unstable classification, 
but in practice, when the classes are clearly separated from each other, 
this simple scheme is successful.

\vskip5mm
\noindent
{\bf Remark 6.} {\it
To choose automatically the number of iterations, we have to fix a maximal number of clusters. 
This means that we have to provide an upper bound, that we denote by $p$, on the number of classes we expect to have. However, 
as it can be seen in the simulations, this choice of $p$ is robust, meaning that $p$ can be harmlessly overestimated.
Thus, denoting by $\widehat{\lambda}_1 \geq \cdots \geq \widehat{\lambda}_n \geq 0$
the eigenvalues of $M$, 
we choose the number $m$ of iterations by solving 
\[
\biggl( \frac{\widehat{\lambda}_p}{\widehat{\lambda}_1} \biggr)^m \simeq \zeta
\]
where $\zeta >0$ is a given small parameter.
The choice of $\zeta$ is also robust and $1/100$ is a reasonable value for it. 
}

\vskip5mm
\noindent
{\bf Remark 7.} {\it 
Our (greedy) classification algorithm consists in taking an index $i_1 \in\{1, \dots, n\}$ at random, in forming the corresponding class 
\[
\widehat{C}_{i_1} = \Bigl\{ \, j \ | \ {C}_{i_1 j} \geq s \, \Bigr\},
\]
where $s$ is a threshold that we take equal to $0.1$ in all our experiments,
and then in starting again with the remaining indices.
In such a way we construct a sequence of indices $i_1, \dots, i_c,$ where, as a consequence, the number of clusters $c$ is automatically estimated. 
The $k$-th class is hence defined as
\[
\widetilde{C}_ k = \widehat{C}_{i_k} \setminus \bigcup_{\ell < k} \widehat{C}_{i_{\ell}}.
\]
}

\subsection{Convergence Results}\label{cr}

We introduce some results on the accuracy of the estimation of the ideal algorithm through the empirical one.
In particular we compare the ideal iterated kernel $\overline{K}_{2m}$
with its estimator $ \widehat{  K}_{2m}$ defined in ~\cref{hatK_m}.\\

\noindent
{\bf Proposition 8.} {\it
Define
\begin{equation}\label{def_chisc}
\chi(x) = \left( \frac{\mu(x)}{\widehat \mu (x)}\right)^{1/2}.
\end{equation}
For any $x, y \in \supp(\mathrm{P}),$ $m>0$,
\begin{multline*}
 \bigl\lvert \widehat{  K}_{2m}(x,y) - \overline{K}_{2m}(x,y)\bigr\rvert  \leq \frac{\displaystyle \max \{1, \lVert \chi \rVert_{\infty}\}^2}{\mu(x)^{1/2} \mu(y)^{1/2}} \Biggl( (2m-1) \lVert \widehat{  \mathcal{  Q}} - \mathcal{  G} \rVert_{\infty}  \Bigl( 1 + \lVert 
\widehat{  \mathcal{  Q}} - \mathcal{  G} \rVert_{\infty} \Bigr)^{2m-2}\\
+ 2 \lVert \chi - 1 \rVert_{\infty} \Biggr) 
\end{multline*}
and
\begin{multline*}
\left\| \widehat{  K}_{2m}(x,\cdot) - \overline{K}_{2m}(x,\cdot) \right\|_{L^2_{\mathrm P}}   \leq \frac{\displaystyle \max \{ 1, \lVert \chi \rVert_{\infty}\}^2}{\mu(x)^{1/2}} \Biggl( (2m-1) \lVert \widehat{  \mathcal{  Q}} - \mathcal{  G} \rVert_{\infty}  \Bigl( 1 + \lVert 
\widehat{  \mathcal{  Q}} - \mathcal{  G} \rVert_{\infty} \Bigr)^{2m-2} \\+ 2 \lVert \chi - 1 \rVert_{\infty} \Biggr).
\end{multline*}
}

\vskip1mm
\noindent
For the proof we refer to Section~\ref{pf1}. 

\vskip2mm
\noindent
Other convergence results are related to the description of the kernel $K_m$ in terms of the representation $\mathbf{R}_x \in L^2_{\mathrm P}$ introduced in 
~\cref{eq:rep}. More precisely, we recall that according to ~\cref{prop_Km}
\[
K_m(x,y) = \Bigl\langle \lVert \mathbf{R}_x \rVert^{-1} \mathbf{R}_x, \lVert \mathbf{R}_y \rVert^{-1} \mathbf{R}_y \Bigr\rangle_{L^2_{\mathrm P}},
\]
where $\mathbf{R}_x(z) = \langle \mathcal{  G}^{m-1} \phi_{\overline K}(x), \phi_{\overline K}(z) \rangle_{\mathcal{  H}}$ and the feature map
$ \phi_{\overline K}$ is defined in ~\cref{phibar}.
Our estimated representation of 
$x$ in $L_{ \mathrm{P}}^2$   
is $ \widehat{  \mathbf{N}}(x)^{-1}  
\widehat{  \mathbf{R}}_x$, where  
\begin{align*}
\widehat{  \mathbf{R}}_x(y)  = \bigl\langle \widehat{  \mathcal{  Q}}^{m-1} \phi_{\widehat K}(x), \phi_{\overline K}(y) \bigr\rangle_{\mathcal{  H}} \quad  \text{ and } \quad
\widehat{  \mathbf{N}}(x) =  \langle \widehat{  \mathcal{  Q}}^{2m-1}  \phi_{\widehat K}(x), \phi_{\widehat K}(x) \rangle_{\mathcal{  H}}^{1/2}.
\end{align*}
This representation is not fully observable because of the presence of the ideal feature map $\phi_{\overline K}$ in the definition of $\widehat{  \mathbf{R}}_x$.
Nevertheless, it can be used in practice
since the representation $\widehat{  \mathbf{R}}_x \in L_{ \mathrm{P}}^2$
is isometric to the fully observed representation 
\[
\widehat{  \mathcal{  R}}_x  = \widehat{  \mathcal{  Q}}^{m-1} \phi_{\widehat K}(x), \qquad x \in \mathcal{  X},
\] 
in the Hilbert space $\bigl( \im (\mathcal{  G}), \lVert \cdot \rVert_{\mathcal{  G}} \bigr)$ 
with the non-observable Hilbert norm $\lVert u \rVert_{\mathcal{  G}} = \langle \mathcal{  G} u, u \rangle^{1/2}$, that could be estimated by $\langle \widehat{  \mathcal{  Q}} u, u \rangle_{\mathcal H}^{1/2}$. For further details we refer to Section~\ref{prof1}.\\[1mm]
In the following we provide non-asymptotic bounds 
for the error of approximating 
of the ideal non-normalized representation $\mathbf{R}_x$ with the estimated non-normalized representation $\widehat{  \mathbf{R}}_x$.

\vskip2mm
\noindent
{\bf Proposition 9.} {\it
Let $\chi$ be defined as in ~\cref{def_chisc}. 
For any $ x \in \supp( \mathrm{P} )$, $f \in {L}^2_{\mathrm{P}},$ $m>0,$
\[
\bigl\lVert \mathbf{R}_x - \widehat{  \mathbf{R}}_x \bigr\rVert_{L_{ \mathrm{P}}^2} \leq \mu(x)^{-1/2} \Bigl( (m-1) \lVert \chi \rVert_{\infty} \lVert \widehat{  \mathcal{  Q}} - \mathcal{  G} \rVert_{\infty} \left( 1+ \lVert \widehat{  \mathcal{  Q}} - \mathcal{  G} \rVert_{\infty}^{m-2} \right) + \lVert \chi - 1 \rVert_{\infty} \Bigr)
\]
and
\begin{multline*}
\Biggl( \int \Bigl\langle \mathbf{R}_x - \widehat{  \mathbf{R}}_x, f 
\Bigr\rangle^2_{L_{ \mathrm{P}}^2} \, \mathrm{d}\mathrm{P}(x) \Biggr)^{1/2} \leq \lVert f \rVert_{L_{ \mathrm{P}}^2} \Bigl( (m-1) \lVert \chi \rVert_{\infty} \lVert \widehat{  \mathcal{  Q}} - \mathcal{  G} \rVert_{\infty} \left( 1+ \lVert \widehat{  \mathcal{  Q}} - \mathcal{  G} \rVert_{\infty}^{m-2} \right)\\ + \lVert \chi - 1 \rVert_{\infty} \Bigr).
\end{multline*}
}

\vskip1mm
\noindent
For the proof we refer to Section~\ref{prof1}.

\vskip2mm
\noindent
The two above propositions link the quality of the approximation (of $\overline{K}_{2m}$ with $ \widehat{  K}_{2m}$ in Proposition 8  
and of $\mathbf{R}_x$ with $\widehat{  \mathbf{R}}_x$ in Proposition 9) to the quality of the approximation 
of the Gram operator $\mathcal G$ with $ \widehat{  \mathcal{  Q}}$.
In order to qualify the approximation error $ \lVert \widehat{  \mathcal{  Q}} - \mathcal{  G} \rVert_{\infty} $
we introduce a new intermediate (non completely observable) operator $\overline{\mathcal G}: \mathcal H \to \mathcal H$ defined as
\begin{align*}
\overline{\mathcal G}  v & = \frac{1}{n} \sum_{i=1}^n \mu(X_i)^{-1} \langle v, \phi_{K}(X_i) \rangle_{\mathcal H}\  \phi_{K}(X_i)\\
& = \frac{1}{n} \sum_{i=1}^n \langle v, \phi_{\overline K}(X_i) \rangle_{\mathcal H}\  \phi_{\overline K}(X_i),
\end{align*}
and we observe that the following result holds.

\vskip2mm
\noindent
{\bf Proposition 10.} {\it
Let $\chi$ be defined as in ~\cref{def_chisc}. 
It holds that 
\[
\lVert \widehat{  \mathcal  Q} - \mathcal{  G} \rVert_{\infty} 
\leq \lVert \overline{\mathcal G}  - \mathcal{  G} \rVert_{\infty} \left( 1+ \| \chi^2 - 1\|_{\infty} \right) + \| \chi^2 - 1\|_{\infty}.
\]
}

\vskip1mm
\noindent
For the proof we refer to Section~\ref{pf2}.

\vskip2mm
\noindent
Observe that, given $\phi_{\overline K}$, the operator $\overline{\mathcal G}$ is the empirical version of the Gram operator $\mathcal{  G}$.
Moreover, by definition, 
\[
\chi -1 = \left( \mu/ \widehat \mu\right)^{1/2}-1
\]
where, according to ~\cref{gA}, $\mu$ is the quadratic form associated to the Gram operator $\mathcal G_A$ and $\widehat \mu$
is its empirical version. 
Thus we conclude this section 
providing a result on the convergence of the empirical Gram estimator to the true one
that has been proved in ~\cite{tesi} and that appears in~\cite{giulini15}. \\
We first introduce some notation. 
Let $\mathcal H$ be a separable Hilbert space and let $\mathrm P$ be a probability distribution on $\mathcal H$. 
Let $\mathcal G: \mathcal H \to \mathcal H$ be the Gram operator
\[
\mathcal G v = \mathbb E_{X \sim \mathrm P} \left[ \langle v, X\rangle_{\mathcal H} X\right]
\]
and consider the empirical estimator 
\[
\widehat{\mathcal G} v = \frac{1}{n} \sum_{i=1}^n  \langle v, X_i\rangle_{\mathcal H} X_i
\]
where $X_1, \dots, X_n \in \mathcal H$ is an i.i.d. sample drawn according to $\mathrm P$. 
Let $\epsilon>0$ and let $\sigma >0$ be a threshold. Let
\[
\kappa  = \sup_{u \in \mathcal H} \frac{\displaystyle\mathbb E_{X \sim \mathrm P} \bigl[ \langle u , X \rangle_{\mathcal H}^4 \bigr]}{\mathbb E_{X \sim \mathrm P} \bigl[ \langle u, X \rangle_{\mathcal H}^2 \bigr]^2}<+\infty.
\]
Define
\begin{align*}
\zeta (t) & = \sqrt{ \frac{2.032 (\kappa-1)}{n} \Bigg( \frac{0.73 \ \mathbf{Tr}(\mathcal G)}{t}  + 4.35 + \log(\epsilon^{-1}) \Bigg)} + \sqrt{ \frac{98.5 \, \kappa \mathbf{Tr}(\mathcal G)}{ n t}}\\
\eta (t) & = \frac{ \zeta(\max \{ t, \sigma \} )}{1 - 4 \,  \zeta( \max \{ t, \sigma \} )} \\
\tau(t) & = { \frac{0.86 \max \| X_i\|_{\mathcal H}^4}{n (\kappa - 1)  \max \{ t, \sigma \}^2 } \Biggl( \frac{0.73 \ \mathbf{Tr}(\mathcal G)}{\max \{ t, \sigma \}}  + 4.35 + \log(\epsilon^{-1}) \Bigg)},
\end{align*}
where $ \mathbf{Tr}(\mathcal G)$ denotes the trace of $\mathcal G$.
The following proposition (proved in \cite{giulini15}) 
holds. 

\vskip2mm
\noindent
{\bf Proposition 11.} {\it
Let $\sigma >0$ be a threshold. 
With probability at least $1-2\epsilon$, for any $u \in \mathcal H$, $\| u\|_{\mathcal H}=1,$
\[
\left| \frac{\max \left\{ \langle \widehat{\mathcal G} u, u \rangle_{\mathcal H}  , \sigma \right\}}{\max \left\{ \langle {\mathcal G} u, u \rangle_{\mathcal H}  , \sigma \right\}} -1 \right| 
\leq  \eta \bigl( \langle {\mathcal G} u, u \rangle_{\mathcal H} \bigr) + \frac{ \tau\bigl( \langle {\mathcal G} u, u \rangle_{\mathcal H} \bigr)}{ \bigl[ 1 - \tau \bigl( \langle {\mathcal G} u, u \rangle_{\mathcal H} \bigr) \bigr]_+ \bigl[ 1 - \eta \bigl( \langle {\mathcal G} u, u \rangle_{\mathcal H} \bigr) \bigr]_+}.
\]
}

\vskip2mm
\noindent
As a consequence,

\vskip2mm
\noindent
{\bf Corollary 12.} {\it
With the same notation as before, with probability at least $1-2\epsilon$, 
\[
\| \mathcal G - \widehat{\mathcal G}\|_{\infty}
\leq  
\| \mathcal G\|_{\infty} \eta \bigl( \| \mathcal G\|_{\infty}  \bigr) + \frac{\sigma \ \tau\bigl( \sigma \bigr)}{ 
\bigl[ 1 - \tau \bigl( \sigma \bigr) \bigr]_+ \bigl[ 1 - \eta \bigl( \sigma \bigr) \bigr]_+} + \sigma.
\]
}

\vskip2mm
\noindent
As explained in~\cite{giulini15}, the threshold $\sigma$ can be chosen going to zero as the sample size $n$ goes to infinity. 
In particular, since $\sigma \tau(\sigma)$ behaves as $\frac{1}{n\sigma^2},$ the optimal value of $\sigma$ is of order $n^{-1/3}.$
As a consequence, according to the above results, 
we obtain a deviation bound in $n^{-1/3}$ for 
\[
\sup_{x,y \in \mathrm{supp}(\mathrm P)}  \bigl\lvert \widehat{  K}_{2m}(x,y) - \overline{K}_{2m}(x,y)\bigr\rvert.
\]
In order to get a deviation bound in $n^{-1/2}$ 
we have to use a more robust estimator for the Gram operator $\mathcal G$, defined in~\cite{giulini15} (see also \cite{tesi}),  
and to split the sample into two parts: the first part is used for the estimation of the kernel $\overline K$
and the other one for the construction of the estimator $\widehat {\mathcal Q}$.

\newpage
\section{Empirical Results}\label{sec3}

We present some results showing how the algorithm described in the previous sections simplifies the geometry of the problem and
in particular how it groups the points to cluster at the vertices of a simplex.
We first provide a toy example on synthetic data and then we test the algorithm in the setting of image analysis. 

\subsection{A First Example}

We consider an i.i.d. set $\left\{ X_1, \dots, X_n \right\}\subset \mathbb R^2$ of $n = 900$ points to cluster, whose configuration is shown in Figure ~\ref{fig0} and we fix the maximum number of classes $p = 7.$\\[1mm]

\begin{figure}[htbp]
\begin{center}
\includegraphics[height=70mm]{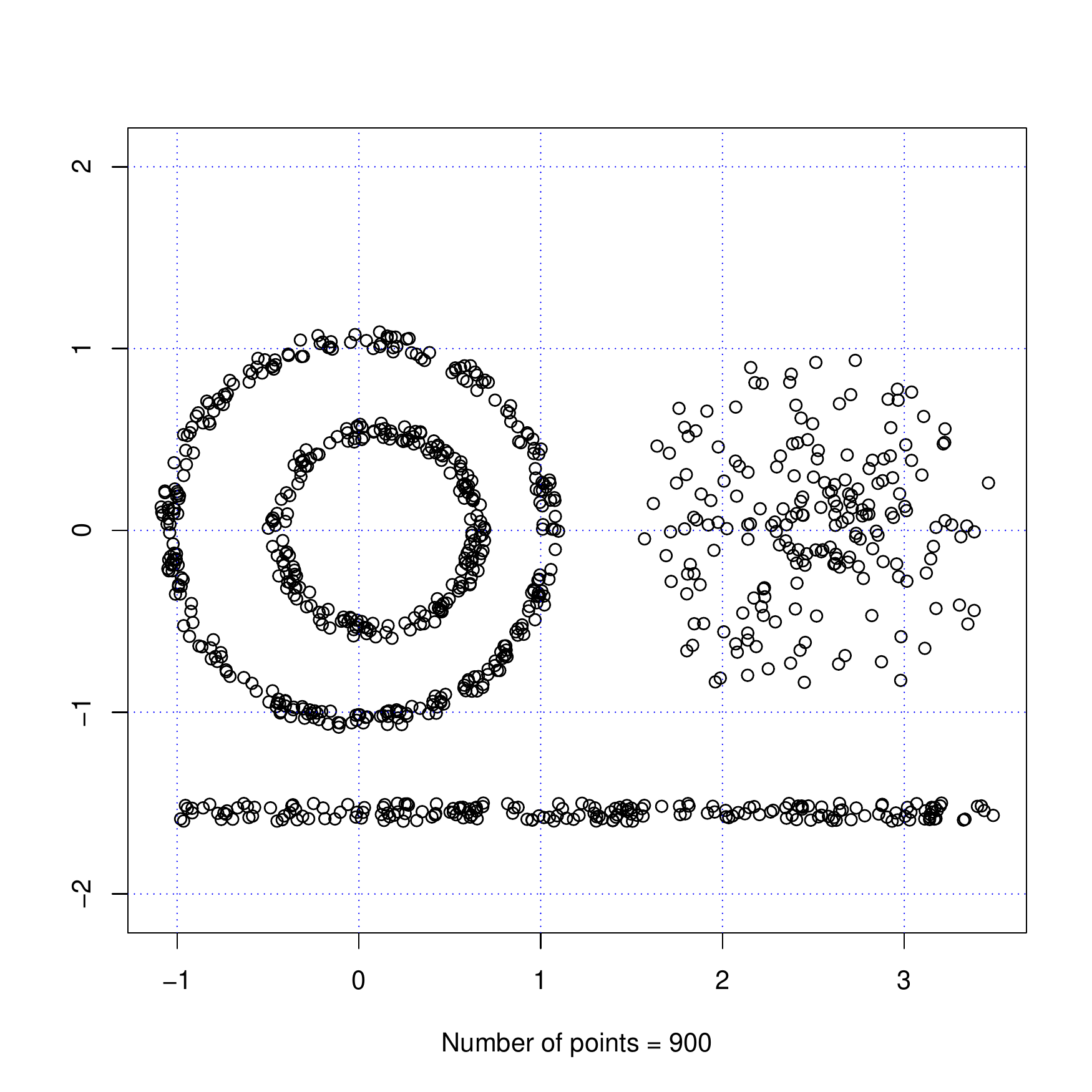}
\caption{The data configuration.}
\label{fig0}
\end{center}
\end{figure}

\vskip 2mm
\noindent
Figure ~\ref{figure1} shows that the new representation, induced by the change of kernel, 
groups the data points at the vertices of the simplex generated by the largest eigenvectors of the matrix $C$, defined in ~\cref{eqC}.
On the left we plot the projection of the simplex along the two first coordinates. 
This simple configuration allows us to compute the classification, 
including the number of clusters, using the straightforward greedy 
algorithm described in Remark 6.

\begin{figure}[htbp]
\begin{center}
\includegraphics[height=70mm]{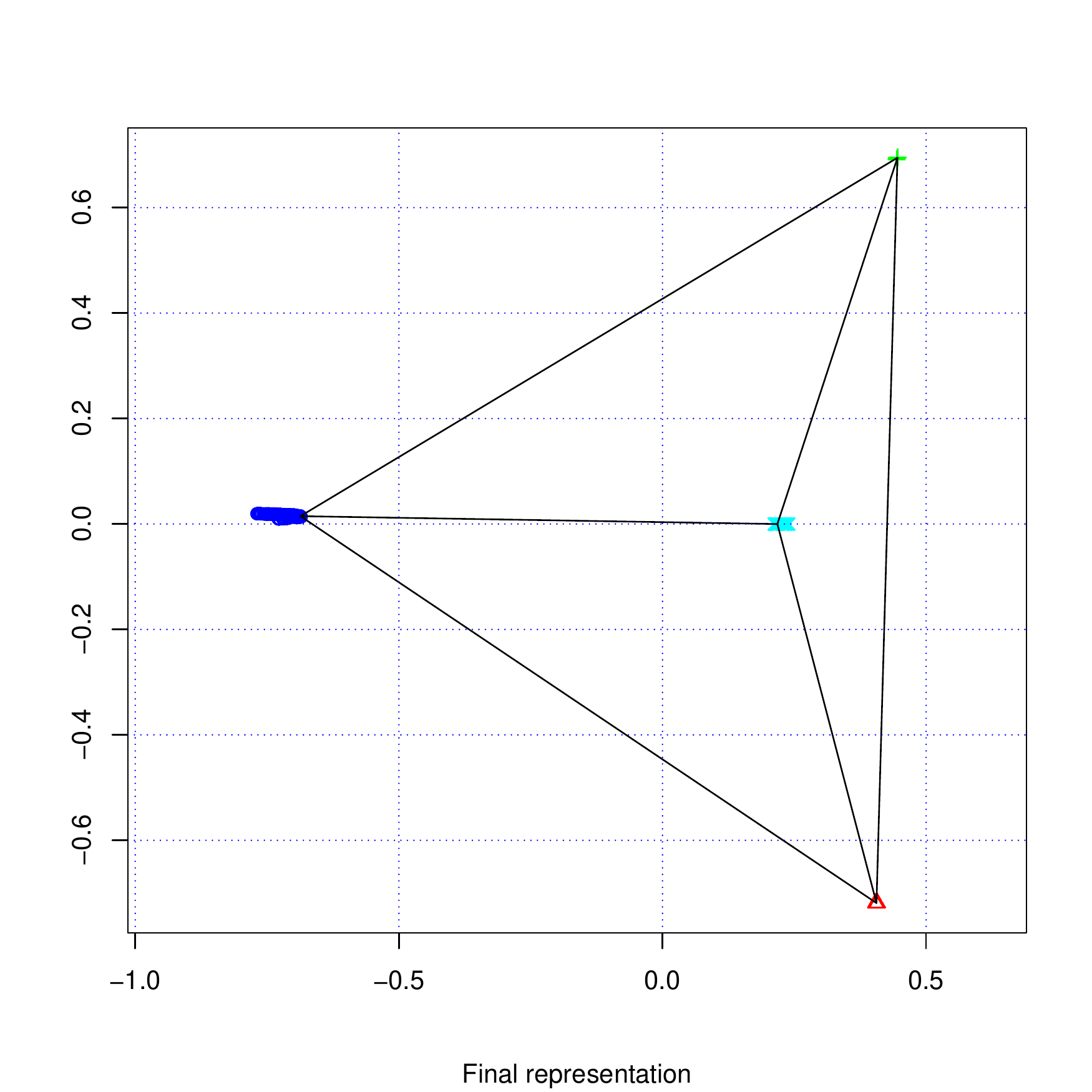}
\includegraphics[height=70mm]{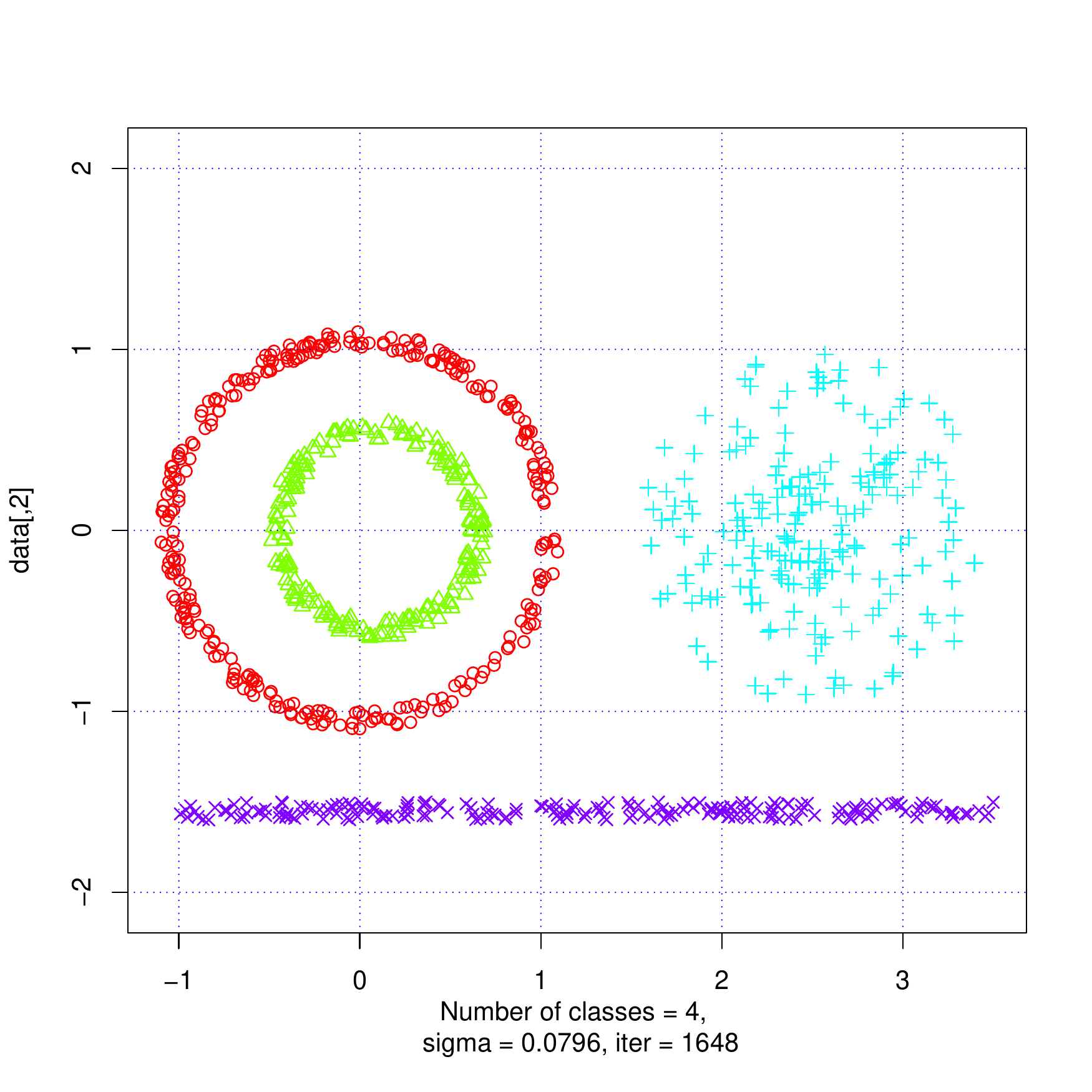}
\caption{On the left the simplex generated by the eigenvectors of $C$, on the right classification performed on $C$.}
\label{figure1}
\end{center}
\end{figure}

\vskip1mm
\noindent
In Figure ~\ref{fig_eigen} we plot the first eigenvalues
of $M$ in black (joined by a solid line),
the eigenvalues of its iteration $M^m$ 
in blue (joined by a dashed line)  
and the eigenvalues of the covariance matrix of the final 
representation (defined by ${C}$) in red (joined by a dash-dotted line).
We observe that the first eigenvalues of $M$ are close to one, while there is a remarkable gap between 
the eigenvalues of its iteration. 
In particular the size of the gap is larger once we have renormalized it using the matrix $C$. 
The number of iterations is automatically estimated and it is equal to $1648.$

\begin{figure}[htbp]
\begin{center}
\includegraphics[height=100mm]{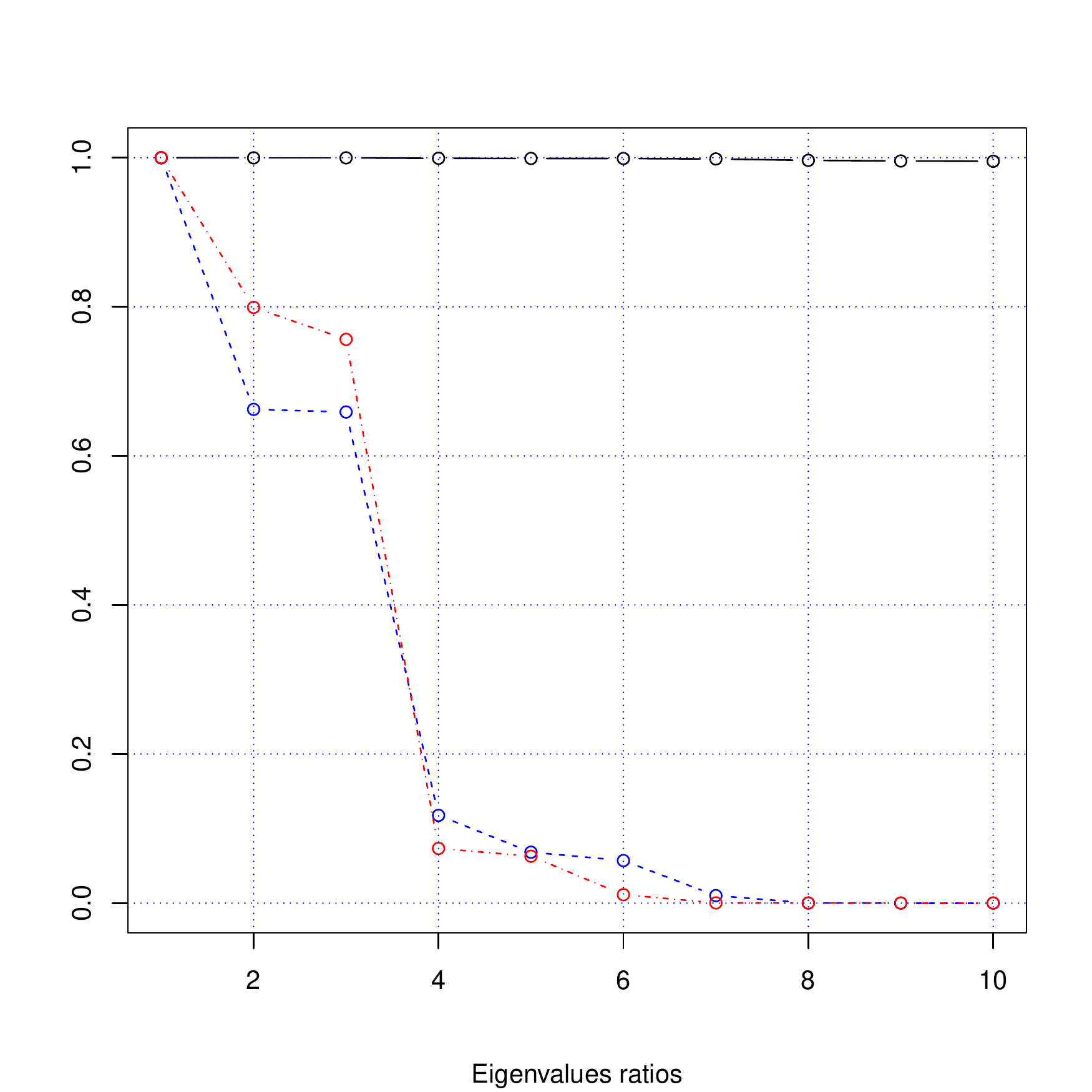}
\caption{In black (solid line) the eigenvalues of $M$, in blue (dashed line) those of  $M^m$ and in red (dash-dot line) the eigenvalues of $C$.}
\label{fig_eigen}
\end{center}
\end{figure}

\subsection{Some Perspectives on Invariant Shape Analysis} 

As we have already mentioned in the introduction, we now present a small example 
about image classification
showning
how our approach may lead to learn transformation-invariant representations 
from data sets containing small successive transformations of a same pattern. 
We briefly describe this approach to get a hint of its potential.
We consider two images (Figure ~\ref{foto_base}) and we create our patterns by translating a subwindow of a given size in each image repeatedly, using a translation vector smaller than the subwindow size.
In such a way we create a sample consisting of two classes of connected images, as shown in Figure ~\ref{seq_foto}.
This notion of translation cannot be grasped easily by a mathematical definition, because we do not translate a function 
but a window of observation. 
Hence in this case the translation depends on the image content and it may not be easy to model in any realistic situation. \\[1mm]
We present, on an example, a successful scenario in which we use 
first a change of representation in a reproducing kernel Hilbert space to better separate the two classes, and then spectral clustering to shrink each class to a tight blob. 
This suggests
that the so called kernel trick, introduced to better separate classes in the supervised learning framework of support vector machines (SVMs), also works in an unsupervised context. In this setting, we do not separate classes using hyperplanes, since we do not know the class labels which would be necessary to run a SVM, but, instead, we use spectral clustering to finish the work. 
\begin{figure}[htbp]
\begin{center}
\includegraphics[height=40mm]{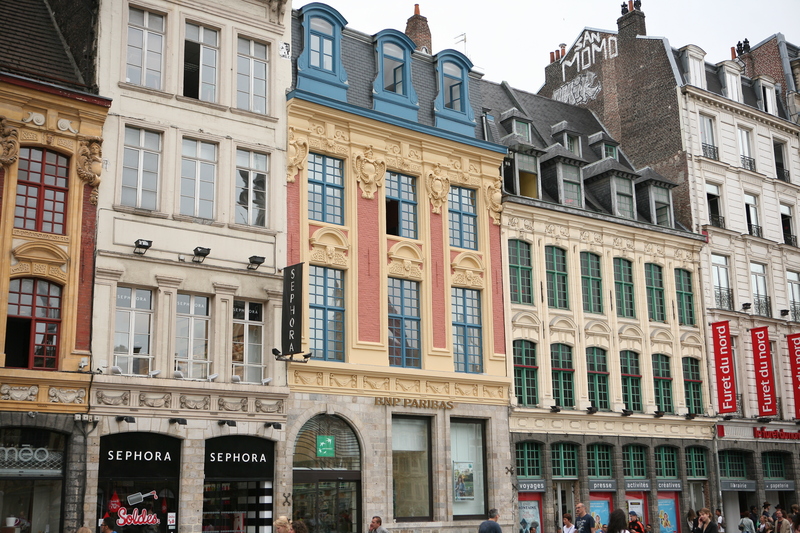}
\includegraphics[height=40mm]{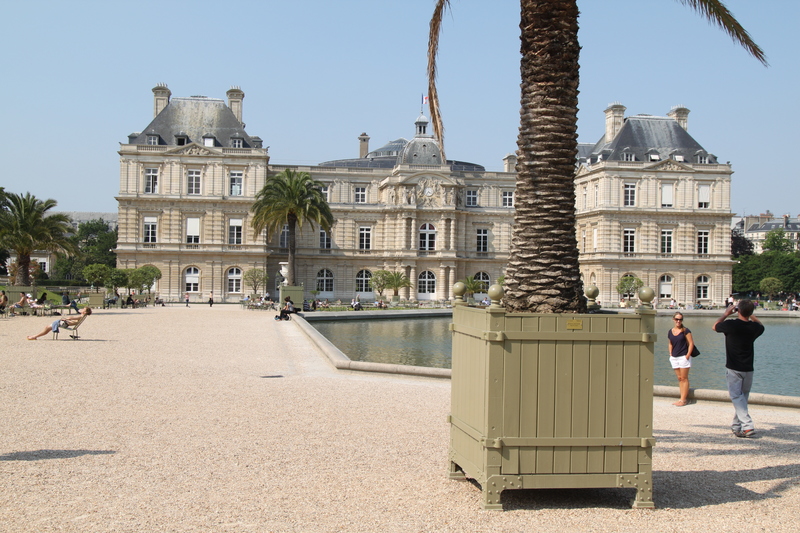}
\caption{The two original images.}
\label{foto_base}
\end{center}
\end{figure}

\begin{figure}[htbp]
\begin{center}
\includegraphics[width=0.1\textwidth]{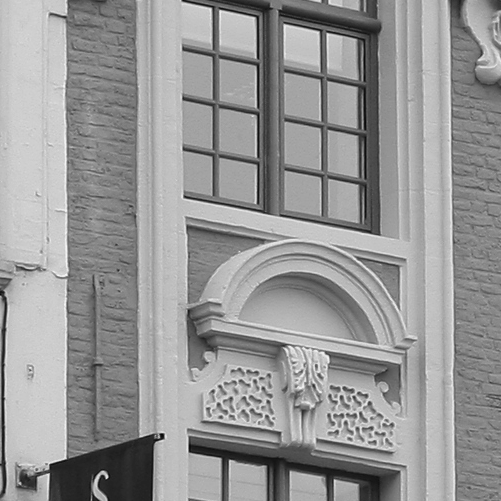}
\includegraphics[width=0.1\textwidth]{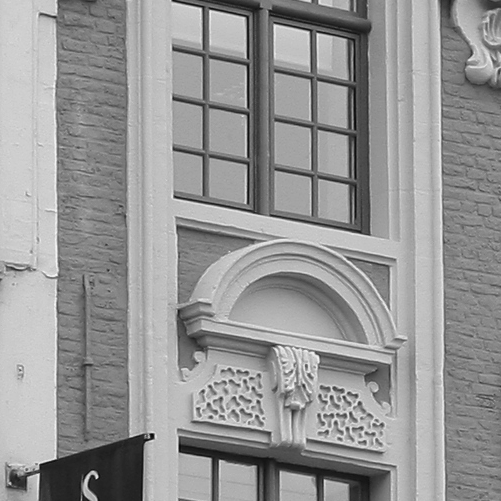}
\includegraphics[width=0.1\textwidth]{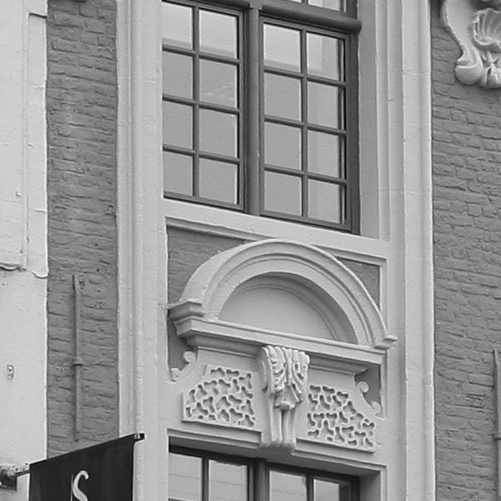}
$\cdots$ 
\includegraphics[width=0.1\textwidth]{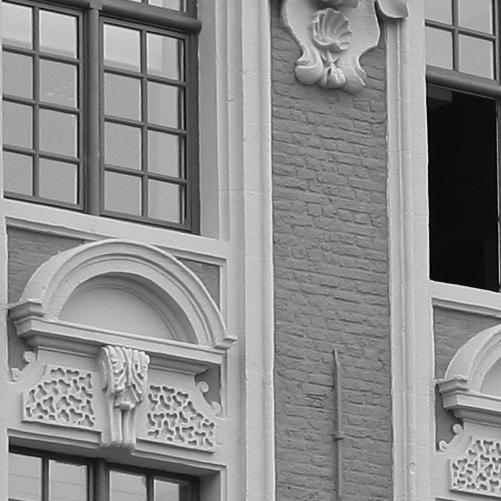}
\includegraphics[width=0.1\textwidth]{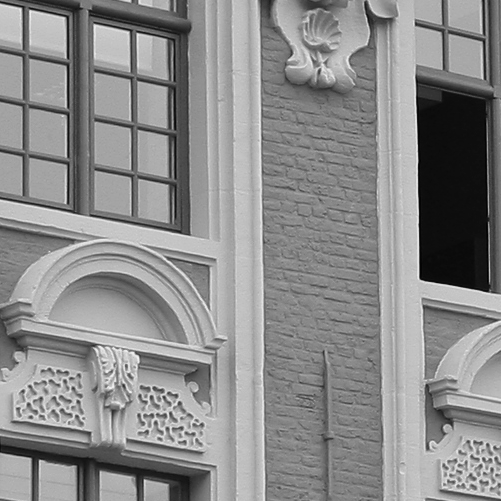}
$\cdots$ 
\includegraphics[width=0.1\textwidth]{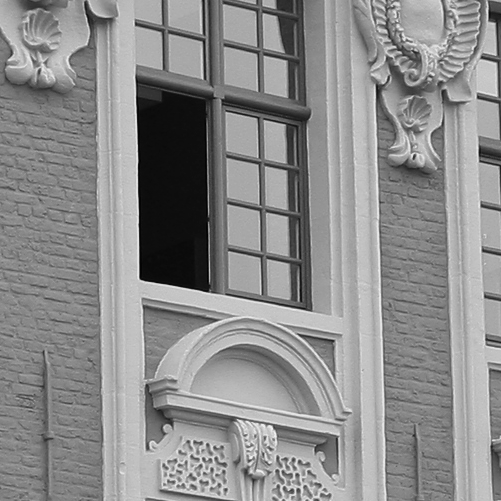}
\includegraphics[width=0.1\textwidth]{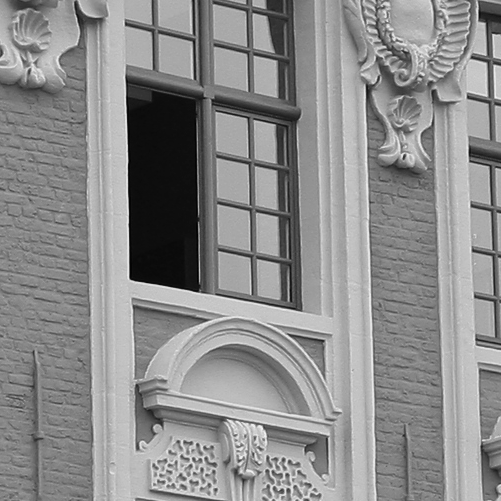}
\includegraphics[width=0.1\textwidth]{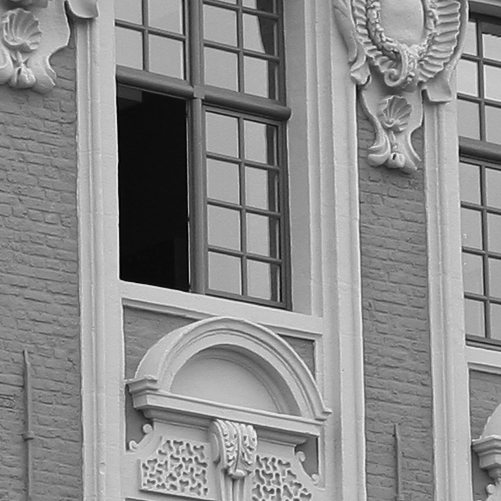}\\[3mm]
\includegraphics[width=0.1\textwidth]{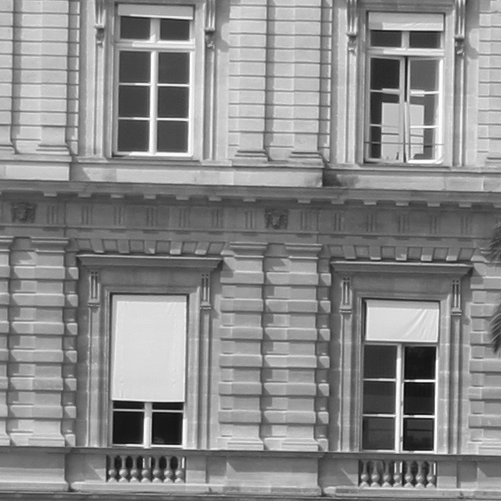}
\includegraphics[width=0.1\textwidth]{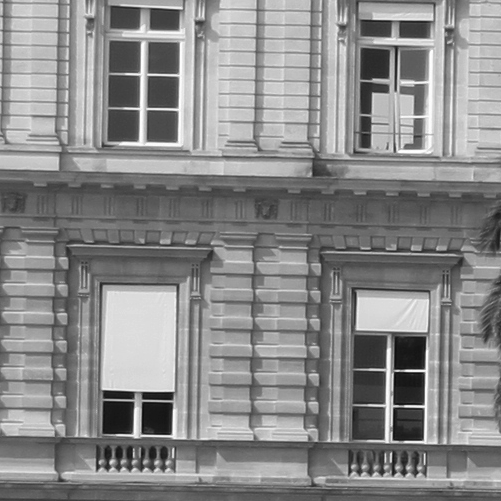}
\includegraphics[width=0.1\textwidth]{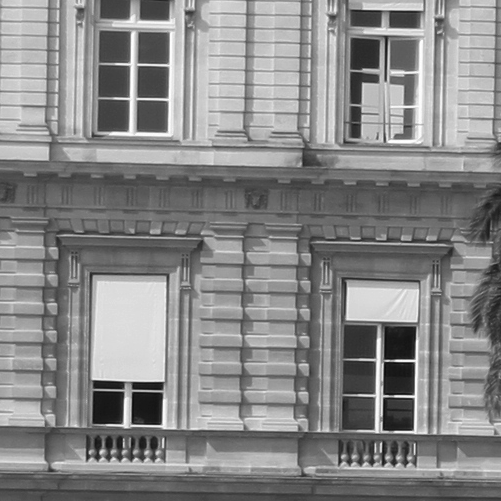}
$\cdots$ 
\includegraphics[width=0.1\textwidth]{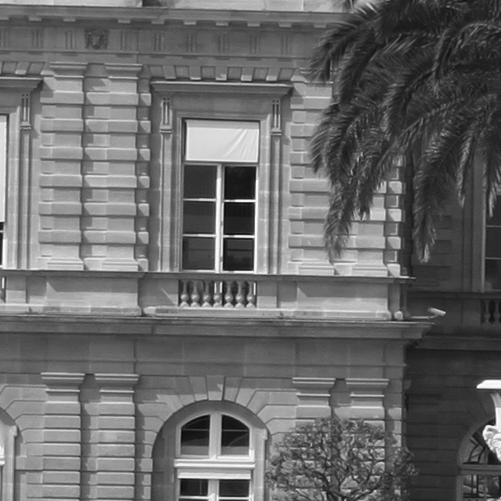}
\includegraphics[width=0.1\textwidth]{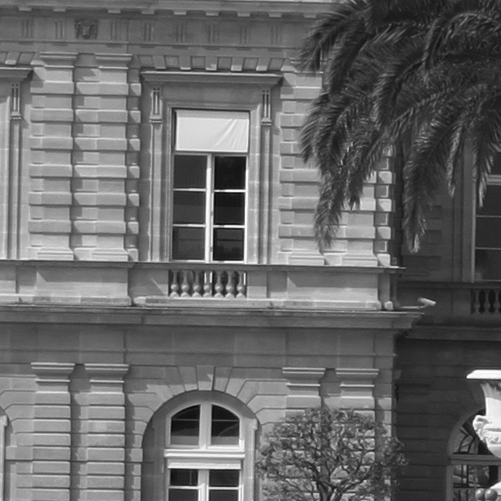}
$\cdots$ 
\includegraphics[width=0.1\textwidth]{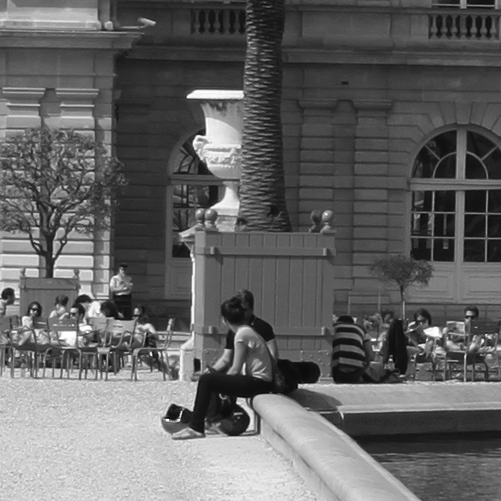}
\includegraphics[width=0.1\textwidth]{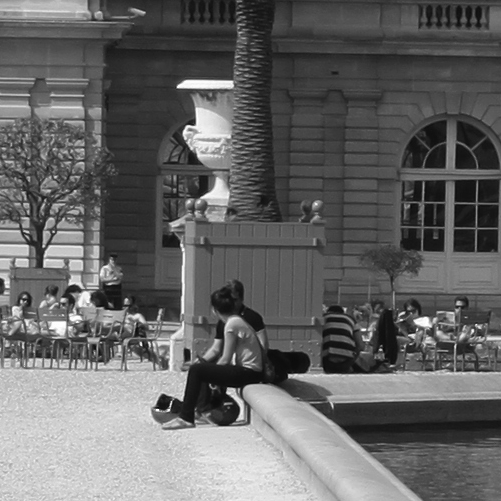}
\includegraphics[width=0.1\textwidth]{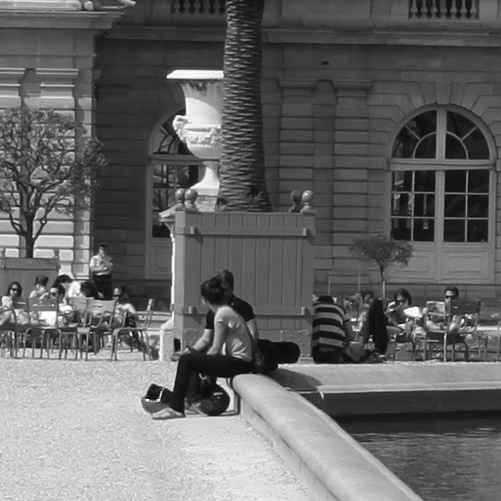}
\caption{Our sample consisting of two classes of connected images, the first sequence is obtained with a horizontal translation, the second one with a diagonal translation.}
\label{seq_foto}
\end{center}
\end{figure}

\vskip 2mm
\noindent
Let $X_1, \dots, X_n$ be the sample of images shown in Figure ~\ref{seq_foto}.
Each photo is represented as a matrix whose entries are the gray values of the corresponding pixels. 
We apply twice the change of representation described by the change of kernel in order to better separate clusters. 
We first consider the reproducing kernel Hilbert space $\mathcal H_1$ defined by
\[
k_1( x, y ) = \exp \left(- \beta_1 \| x-y\|^2 \right)
\]
and then the reproducing kernel Hilbert space $\mathcal H_2$ defined by 
\begin{align*}
k_2( x, y )  = \exp \left(- \beta_2 \| x-y\|_{\mathcal H_1}^2 \right) & =  \exp \Bigl(- 2\beta_2 \bigl( 1-k_1(x,y) \bigr) \Bigr)\\
&  = \exp \left[ -2\beta_2 \left( 1 - \exp \left( -\beta_1 \| x - y \|^2 \right) \right)\right]
\end{align*}
where $\beta_1,\beta_2>0$ are obtained as described in Section~\ref{choice_beta}. 
Define the new kernel
\[
K(x,y)=  \exp \left(- \beta \| x-y\|_{\mathcal H_2}^2 \right),
\]
where the parameter $\beta>0$ is chosen again as in Section~\ref{choice_beta}, 
and apply the algorithm described in Section~\ref{impa}.

\vskip2mm
\noindent
In Figure ~\ref{rep_images} we compare the representation of the images in the initial space and in the space $\mathcal H_2.$ 
On the left we present the projection of the sample onto 
the space spanned by the first two largest eigenvectors of the matrix of inner products between images 
$\langle X_i, X_j\rangle. $
On the right we plot the projection onto the space spanned by the two largest eigenvectors of the matrix of inner products $ k_2(X_i, X_j)$ in $\mathcal H_2$. 
We observe that in the first representation the two classes intersect each other while in the second one, after the change of representation, they are already separated.  \\[1mm]
To conclude, Figure ~\ref{fin_rep} shows the final representation. 
Here the data points are projected onto the space spanned by the two largest eigenvectors of the matrix 
$M^m.$ In this case the number of iteration $m$ is of order of $30.000$.

\begin{figure}[htbp]
\begin{center}
\includegraphics[height=50mm]{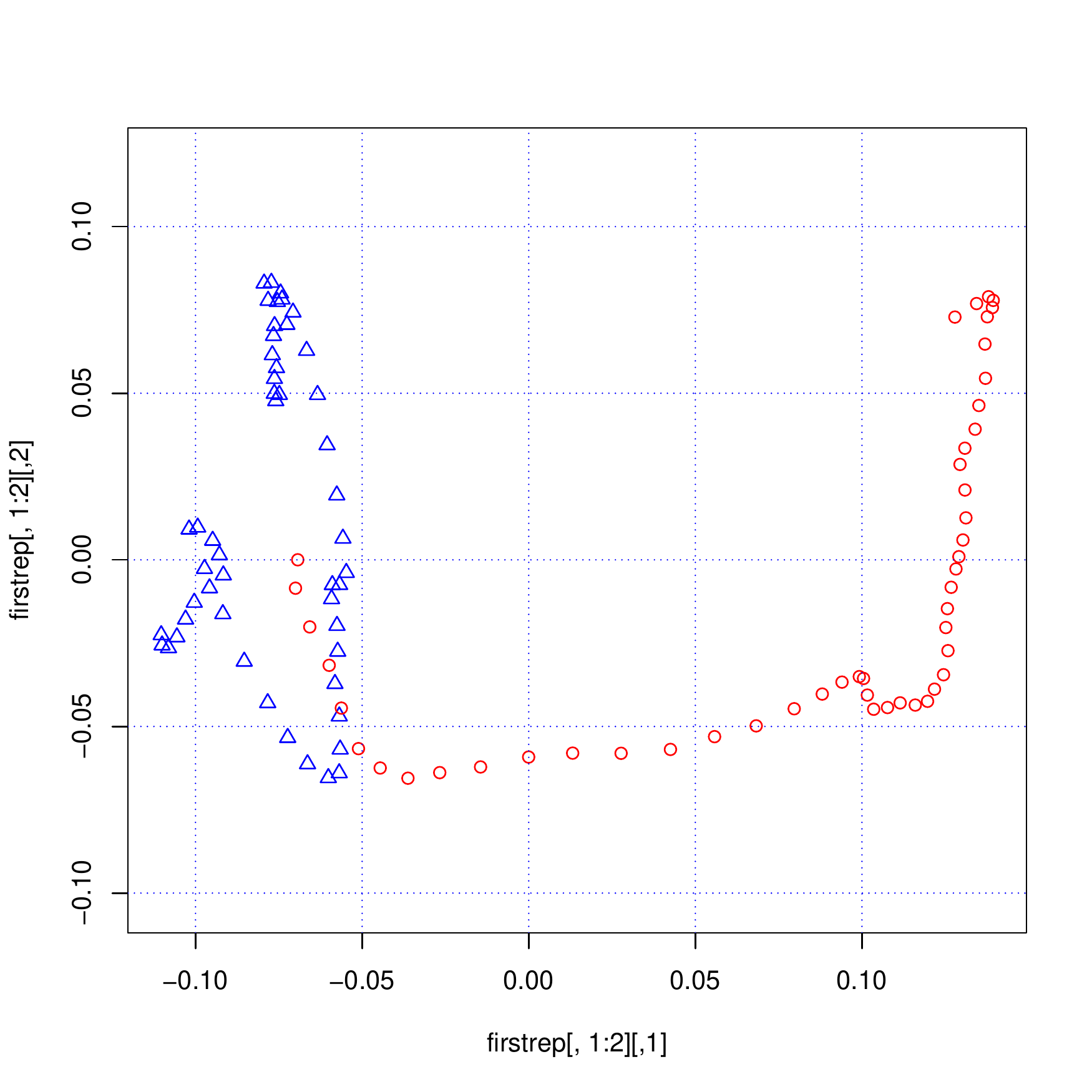}
\includegraphics[height=50mm]{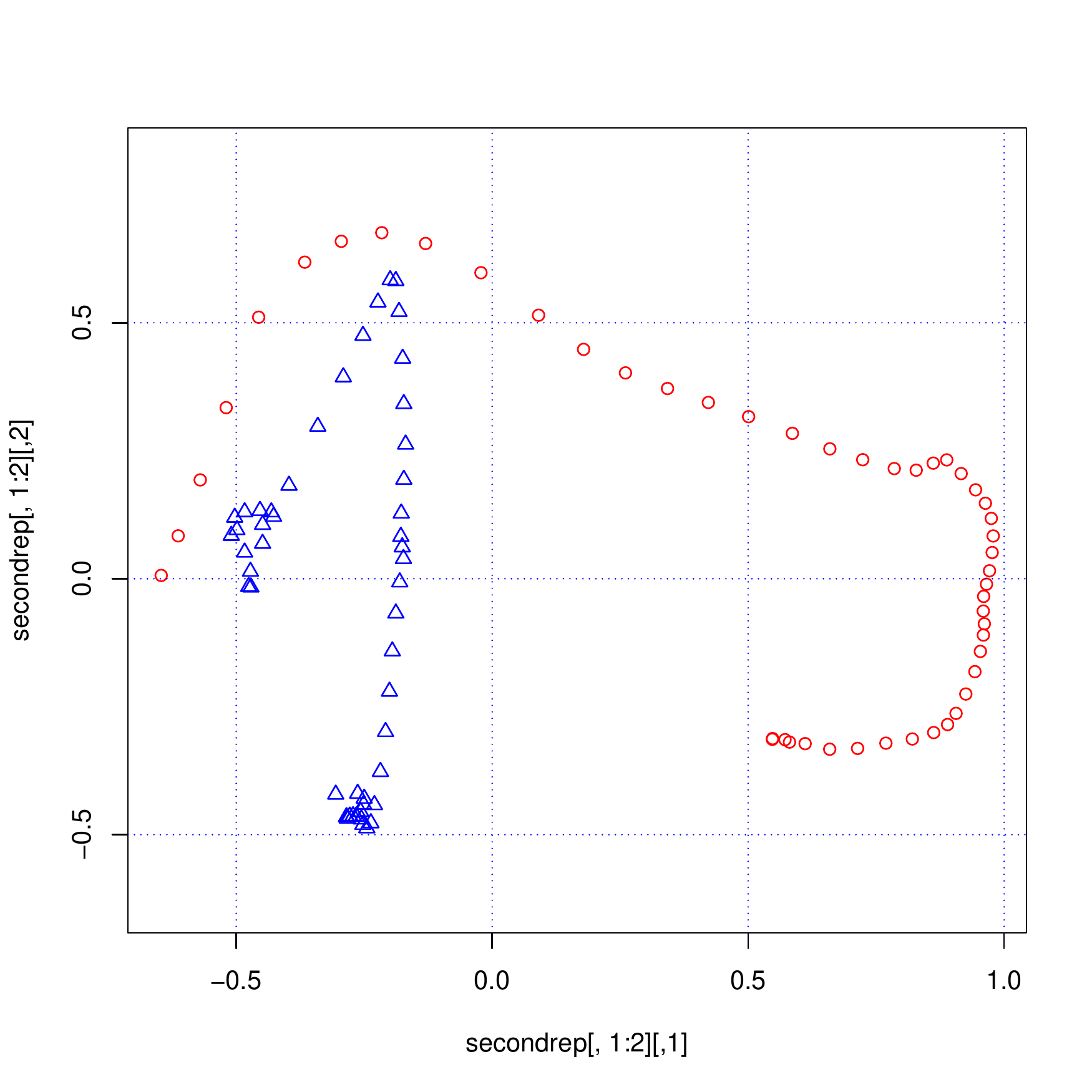}
\caption{On the left the projection onto the space spanned by the two largest eigenvectors of $ \langle X_i, X_j\rangle$, on the right the projection onto the space spanned by the two largest eigenvectors of $k_2(X_i, X_j)$.}
\label{rep_images}
\end{center}
\end{figure}

\begin{figure}[htbp]
\begin{center}
\includegraphics[height=50mm]{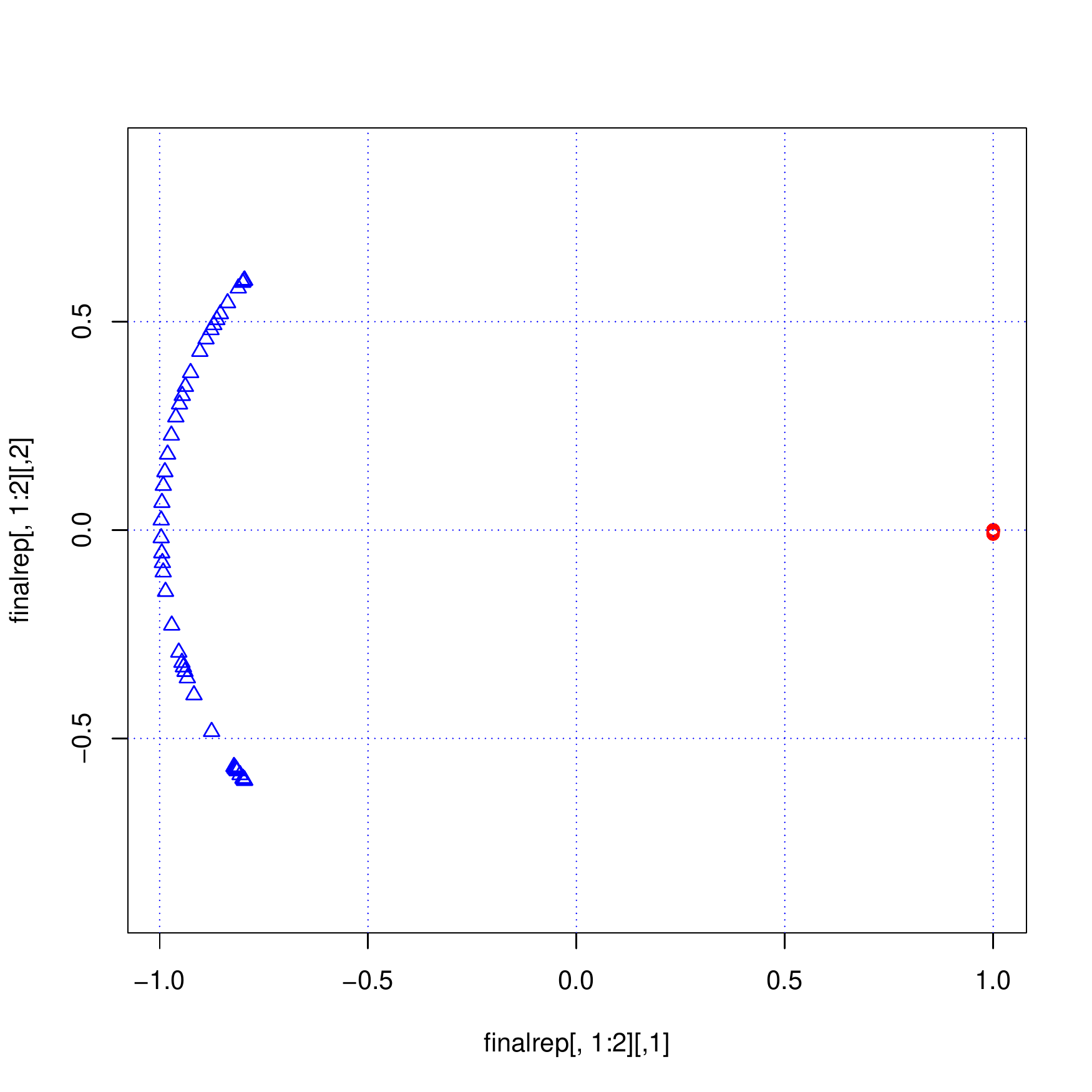}
\caption{The projection onto the space spanned by the two largest eigenvectors of $M^m.$}
\label{fin_rep}
\end{center}
\end{figure}

\newpage

\section{Proofs}\label{proofs}

We introduce a technical result that is useful in the following. 
Let $\mathcal G$ be the Gram operator defined in ~\cref{defGop} such that $\| \mathcal G\|_\infty=1$
and let $\widehat{\mathcal Q}$
be its estimator introduced in Section~\ref{emp_algo}.

\vskip2mm
\noindent
{\bf Lemma 13.} {\it
For any $m>0$
\begin{align*}
\lVert \widehat{  \mathcal{  Q}}^m - \mathcal{  G}^m \lVert_{\infty} 
& \leq 
\Bigl(  1 + \lVert \widehat{  \mathcal{  Q}} - \mathcal{  G} \rVert_{\infty} \Bigr)^m - 1  \\
& \leq m \lVert \widehat{  \mathcal{  Q}} - \mathcal{  G} \rVert_{\infty}  \Bigl( 1 + \lVert 
\widehat{  \mathcal{  Q}} - \mathcal{  G} \rVert_{\infty} \Bigr)^{m-1}.
\end{align*}
}

\begin{proof}
Since 
\[ 
\widehat{  \mathcal{  Q}}^m - \mathcal{  G}^m = \sum_{k=0}^{m-1} \widehat{  \mathcal{  Q}}^{k} 
( \widehat{  \mathcal{  Q}} - \mathcal{  G} ) \mathcal{  G}^{m-k-1}, 
\] 
using the fact that $\| \mathcal G\|_\infty=1$ we get
\[ 
\lVert \widehat{  \mathcal{  Q}}^m - \mathcal{  G}^m \rVert_{\infty} \leq \lVert \widehat{  \mathcal{  Q}} - \mathcal{  G} \rVert_{\infty} \sum_{k=0}^{m-1} 
\lVert \widehat{  \mathcal{  Q}} \rVert_{\infty}^k. 
\] 
Hence, as $\lVert \widehat{  \mathcal{  Q}} \rVert_{\infty} \leq 1 + \lVert \widehat{  \mathcal{  Q}} - \mathcal  G \rVert_{\infty}$, we conclude 
\begin{multline*}
\lVert \widehat{  \mathcal{  Q}}^m - \mathcal  G^m  \rVert_{\infty} 
 \leq  \Bigl(  1 + \lVert \widehat{  \mathcal{  Q}} - \mathcal{  G} \rVert_{\infty} \Bigr)^m - 1 \\ 
= \lVert \widehat{  \mathcal{  Q}} - \mathcal{  G} \rVert_{\infty} \sum_{k=0}^{m-1} \bigl( 1 + \lVert \widehat{  \mathcal{  Q}} - \mathcal{  G} \rVert_{\infty} \bigr)^k
\leq m \lVert \widehat{  \mathcal{  Q}} - \mathcal{  G} \rVert_{\infty} \bigl( 1 + \lVert \widehat{  \mathcal{  Q}} - \mathcal{  G} \rVert_{\infty} \bigr)^{m-1}. 
\end{multline*}
$\blacksquare$
\end{proof}

\vskip2mm
\noindent
We also introduce the intermediate operator $\widetilde{\mathcal G} : \mathcal H \to \mathcal H$
\[
\widetilde{\mathcal G} u = \int \left\langle u,  \phi_{\widehat K}(z) \right\rangle_{\mathcal H} \ \phi_{\widehat K}(z) \, \mathrm d \mathrm P(z)
\]
where, according to the notation of Section~\ref{sec2}, $\phi_{\widehat K} (x) = \chi(x) \phi_{\overline K}(x)$ and $\chi(x) = \big( \mu(x)/ \widehat \mu (x)\big)^{1/2}.$

\subsection{Proof of Proposition 8}\label{pf1}

To prove the first inequality, we define 
\[
\mathcal{  E}(x, y) = \bigl\lvert \widehat{  K}_{2m}(x,y) - \overline{K}_{2m}(x,y)\bigr\rvert
\]
and we observe that, according to Lemma 13,  
it is sufficient to show that 
\begin{equation}
\label{b1}
\mathcal{  E}(x,y) \leq \frac{\displaystyle \max \{1, \lVert \chi \rVert_{\infty}\}^2}{\mu(x)^{1/2} \mu(y)^{1/2}} \Bigl( \lVert \widehat{  \mathcal{  Q}}^{2m-1} - \mathcal{  G}^{2m-1} \rVert_{\infty} + 2 \lVert \chi - 1 \rVert_{\infty} \Bigr).
\end{equation}
By definition, 
\begin{multline*}
\mathcal{  E}(x,y) = \bigl\lvert \left\langle \widehat{  \mathcal{  Q}}^{2m-1}   \phi_{\widehat K}(x),  \phi_{\widehat K}(y) \right\rangle_{\mathcal{  H}} - \left\langle \mathcal{  G}^{2m-1} \phi_{\overline K}(x), \phi_{\overline K}(y) 
\right\rangle_{\mathcal{  H}} \bigr\rvert \\ 
\leq \bigl\lvert \left\langle (\widehat{  \mathcal{  Q}}^{2m-1} - \mathcal{  G}^{2m-1} ) \phi_{\widehat K}(x), \phi_{\widehat K}(y) \right\rangle_{\mathcal{  H}} \bigr\rvert + \bigl\lvert \left\langle 
\mathcal{  G}^{2m-1} \bigl( \phi_{\widehat K}(x) - \phi_{\overline K}(x) \bigr), \phi_{\widehat K}(y) \right\rangle_{\mathcal{  H}} \bigr\rvert \\ + \left\langle \mathcal{  G}^{2m-1} \phi_{\overline K}(x), \bigl( \phi_{\widehat K}(y) - \phi_{\overline K}(y) \bigr)\right\rangle_{\mathcal{  H}} \bigr\rvert.
\end{multline*}
Recalling the definition of $\phi_{\widehat K},$ 
we get
\[
\mathcal{  E}(x,y) \leq \lVert \widehat{  \mathcal{  Q}}^{2m-1} - \mathcal{  G}^{2m-1} \rVert_{\infty} \lVert 
\chi \rVert_{\infty}^2 \lVert \phi_{\overline K}(x) \rVert_{\mathcal{  H}} \lVert \phi_{\overline K}(y) 
\rVert_{\mathcal{  H}} 
+ \lVert \chi - 1 \rVert_{\infty} \bigl( 
\lVert \chi \rVert_{\infty} + 1 \bigr) \lVert \phi_{\overline K}(x) \rVert_{\mathcal{  H}} \lVert \phi_{\overline K}(y) \rVert_{\mathcal{  H}},
\]
where, since $K(x,x) =1$, 
\[
 \lVert \phi_{\overline K}(x) \rVert_{\mathcal{  H}}^2 = \overline K(x,x) = \frac{K(x,x)}{\mu(x)^{1/2} \mu(x)^{1/2}} = \frac{1}{\mu(x)}. 
\]
This proves ~\cref{b1}. 
To prove the second bound we define 
\[
\mathcal{  E}(x) = \Biggl( \int \mathcal{  E}(x,y)^2 \, \mathrm{d}\mathrm{P}(y) \Biggr)^{1/2}
\]
so that using Lemma 13  
again, it is sufficient to show that 
\[
\mathcal{  E}(x) \leq \frac{\displaystyle \max \{ 1, \lVert \chi \rVert_{\infty}\}^2}{\mu(x)^{1/2}} \Bigl( \lVert \widehat{  \mathcal{  Q}}^{2m-1} - \mathcal{  G}^{2m-1} \rVert_{\infty} + 2 \lVert \chi - 1 \rVert_{\infty} \Bigr).
\]
Observe that 
\begin{multline*}
\mathcal{  E}(x) = \Biggl( \int \Bigl( \left\langle \widehat{  \mathcal{  Q}}^{2m-1} \phi_{\widehat K}(x), 
\phi_{\widehat K}(y) \right\rangle_{\mathcal{  H}} - \left\langle \mathcal{  G}^{2m-1} \phi_{\overline K}(x), 
\phi_{\overline K}(y) \right\rangle_{\mathcal{  H}} \Bigr)^2 \, \mathrm{d}\mathrm{P}(y) \Biggr)^{1/2} 
\\ 
\leq \Biggl( \int \left\langle 
\bigl( \widehat{  \mathcal{  Q}}^{2m-1} - \mathcal{  G}^{2m-1} \bigr)   \phi_{\widehat K}(x), 
\phi_{\widehat K}(y) \right\rangle_{\mathcal{  H}}^2 \, \mathrm{d}\mathrm{P}(y) \Biggr)^{1/2} 
\\ \qquad \qquad \qquad+ \Biggl( \int \bigl\langle \mathcal{  G}^{2m-1} \bigl( 
 \phi_{\widehat K}(x) - \phi_{\overline K}(x) \bigr), \phi_{\widehat K}(y) \bigr\rangle_{\mathcal{  H}}^2 \, 
\mathrm{d}\mathrm{P}(y) \Biggr)^{1/2} \\ + 
\Biggl( \int \left\langle \mathcal{  G}^{2m-1} \phi_{\overline K}(x), \phi_{\widehat K}(y) - \phi_{\overline K}(y) 
\right\rangle_{\mathcal{  H}}^2 \, \mathrm{d}\mathrm{P}(y) \Biggr)^{1/2}. 
\end{multline*}
By definition of $\widetilde{\mathcal G}$ we get 
\begin{multline*}
\mathcal{  E}(x)  
\leq \bigl\lVert  \widetilde {\mathcal G}^{1/2} \bigl( \widehat{  \mathcal{  Q}}^{2m-1} -  \mathcal{  G}^{2m-1} \bigr) \phi_{\widehat K}(x) \bigr\rVert_{\mathcal{  H}} + \bigl\lVert  \widetilde {\mathcal G}^{1/2} \mathcal{  G}^{2m-1} \bigl( \phi_{\widehat K}(x) - \phi(x) \bigr) \rVert_{\mathcal{  H}} \\ 
+ \lVert \chi - 1 \rVert_{\infty} \bigl\lVert \mathcal{  G}^{2m-1/2} \phi(x) \bigr\rVert_{\mathcal{  H}}.
\end{multline*}
Thus using the fact that $\| \mathcal G\|_{\infty} =1$
and that, for any $u \in \mathcal H$,
\[
\|  \widetilde {\mathcal G}^{1/2} u\|_{\mathcal  H}^2 =  \langle  \widetilde {\mathcal G} u, u \rangle_{\mathcal  H} = 
\int \langle u , \phi_{\widehat K}(y) \rangle_{\mathcal  H}^2 \ \mathrm d \mathrm P(y)
\leq \lVert \chi \rVert_{\infty}^2 \langle \mathcal{  G} u, u \rangle, 
\]
we conclude.

\subsection{Proof of Proposition 9}\label{prof1} 

In order to prove the two inequalities we need to introduce some preliminary results.
Consider the operator $\mathcal{  S} : \mathcal{  H} \to L_{ \mathrm{P}}^2$ 
\[
\mathcal{  S}(u) : x \mapsto \langle u, \phi_{\overline K} (x) \rangle_{\mathcal{  H}}.
\]
Introduce the operator $\mathbf{G} :L_{ \mathrm{P}}^2  \to L_{ \mathrm{P}}^2$
\[
\mathbf{G}(f)(x) =  \mathcal{  S} \mathcal{  S}^* f(x)= \int \overline K(x, y) f(y) \, \mathrm{d}\mathrm{P}(y)
\]
and observe that the Gram operator $\mathcal G$ rewrites as 
\[
\mathcal{  G} u = \mathcal{  S}^* \mathcal{  S} u =\int \langle u, \phi_{\overline K}(z) \rangle_{\mathcal H}\  \phi_{\overline K}(z) \ \mathrm d \mathrm P(z) .
\]
Moreover $\| \mathbf{G} \|_{\infty} = \| \mathcal  G \|_{\infty} = 1.$

\vskip2mm
\noindent
{\bf Lemma 14.} {\it 
We have 
\[
\im(\mathbf{G}) = \mathcal{  S} \bigl( \im(\mathcal{  G}) \bigr) = \mathcal{  S} \bigl( \im(   \widetilde {\mathcal G} ) \bigr).
\]
}

\begin{proof}
We observe that, since $\mathbf{G}$ is symmetric, $\im(\mathbf{G}) = \im(\mathbf{G}^2)$, 
where $\mathbf{G}^2 = \mathcal{  S} \mathcal{  G} \mathcal{  S}^*$. Thus
$\im(\mathbf{G}) = \im(\mathbf{G}^2) \subset \mathcal{  S}\bigl(\im(\mathcal{  G})\bigr)$. Since $\mathcal{  S} \mathcal{  G} = \mathbf{G} \mathcal{  S}$, we conclude that $\mathcal{  S} \bigl( \im(\mathcal{  G}) \bigr) \subset \im \mathbf{G}$. \\[1mm]
To prove the second identity, we remark that, since $\chi(x)>0$, 
\begin{align*}
\Kern( \widetilde{\mathcal G} ) &= \biggl\{ u \in \mathcal{  H}, \int \langle u,\phi_{\widehat K}(x) \rangle^2 \, \mathrm{d}\mathrm{P}(x) = 0 \biggr\} \\
& = \biggl\{ u \in \mathcal{  H}, \int \langle u, \phi(x) \rangle^2 \, \mathrm{d}\mathrm{P}(x) = 0 \biggr\} = \Kern(\mathcal{  G}).
\end{align*}
Consequently, since $\im(\mathcal{  G}) = \Kern(\mathcal{  G})^{\perp}$, we get 
\begin{align*}
\cspan \Bigl( \phi \bigl( \supp(P) \bigr)  \Bigr) & = \Kern(\mathcal{  G})^{\perp} = \im(\mathcal{  G})  = \im(\widetilde{\mathcal G} ) = \cspan \Bigl( \phi_{\widehat K} \bigl( \supp(P) \bigr) \Bigr).
\end{align*}
$\blacksquare$
\end{proof}

\vskip 2mm
\noindent
Therefore any $f \in \im(\mathbf{G})$ is of the form $f = \mathcal{  S}u$, with $u \in \im(\mathcal{  G}),$ so that we can estimate
\[
\lVert f \rVert_{L_{ \mathrm{P}}^2}^2 = \langle \mathcal  G u, u \rangle_{\mathcal H}
\]
by  $\langle \widehat{  \mathcal{  Q}} u, u \rangle_{\mathcal{  H}}$. The estimation error is bounded as described in the following 
lemma. 

\vskip2mm
\noindent
{\bf Lemma 15.} {\it 
For any $u, v \in \mathcal{  H}$, 
\[ 
\bigl\lvert \langle \mathcal{  S} u , \mathcal{  S} v \rangle_{L_{ \mathrm{P}}^2} 
- \langle \widehat{  \mathcal{  Q}} u, v \rangle_{\mathcal{  H}} \bigr\rvert \leq \lVert 
\mathcal{  G} - \widehat{  \mathcal{  Q}} \rVert_{\infty} \lVert u \rVert_{\mathcal{  H}} 
\lVert v \rVert_{\mathcal{  H}}.
\] 
In particular, 
\[ 
\bigl\lvert \lVert \mathcal{  S}u \rVert^2_{L_{ \mathrm{P}}^2} - \langle 
\widehat{  \mathcal{  Q}} u , u \rangle_{\mathcal{  H}}
\bigr\rvert \leq \lVert \mathcal{  G} - \widehat{  \mathcal{  Q}} \rVert_{\infty} \lVert u \rVert^2_{\mathcal{  H}}.
\] 
}

\begin{proof}
It is sufficient to observe that $ \langle \mathcal{  S} u, \mathcal{  S} v \rangle_{L_{ \mathrm{P}}^2} =  \langle \mathcal{  G} u, v \rangle_{\mathcal H}$.
$\blacksquare$
\end{proof}

\vskip2mm
\noindent
We now observe that 
\begin{equation}\label{R:imG} 
\aligned
\mathbf{R}_x & = \mathcal{  S} \mathcal{  G}^{m-1} {\phi_{\overline K}}(x)\in \im(\mathbf{G})\\
\widehat{  \mathbf{R}}_x  & = \mathcal{  S} \widehat{  \mathcal{  Q}}^{m-1} \phi_{\widehat K}(x) \in \im(\mathbf{G}) \quad \text{almost surely}.
\endaligned
\end{equation} 
Indeed, by definition,
\begin{align*}
\widehat{  \mathbf{R}}_x  = \mathcal{  S} \widehat{  \mathcal{  Q}}^{m-1} \phi_{\widehat K}(x) \quad \text{and} \quad 
\mathbf{R}_x  = \mathcal{  S} \mathcal{  G}^{m-1} {\phi}_{\overline K}(x).
\end{align*}  
Hence, since 
\[
\im(\mathbf{G}) = \mathcal{  S} \bigl( \im ( \mathcal{  G} ) \bigr) = \mathcal{  S} \Bigl( \cspan  \phi_{\overline K} \bigl( \supp ( \mathrm{P} ) \bigr) \Bigr),
\]
we conclude that 
$\mathbf{R}_x \in \im(\mathbf{G})$. Moreover, since 
\begin{align*}
\Vect \bigl\{ \phi_{\widehat K}(X_n), \dots,  \phi_{\widehat K}(X_{2n})\bigr\} & = \Vect \bigl\{ \phi_{\overline K}(X_n), \dots, \phi_{\overline K}(X_{2n}) \bigr\} \\
& \subset \cspan \Bigl( \phi_{\overline K} \bigl( \supp ( \mathrm{P} ) \bigr) \Bigr) \quad \text{almost surely,}
\end{align*}
it is also true that $\widehat{  \mathbf{R}}_x \in \im( \mathbf{G} )$. 

\vskip2mm
\noindent
We can now prove the two bounds presented in the proposition. Define
\begin{align*}
\mathcal{  E}_r(x) & = \bigl\lVert \mathbf{R}_x - \widehat{  \mathbf{R}}_x \bigr\rVert_{L_{ \mathrm{P}}^2}\\
\mathcal{  E}_c(f) & = \Biggl( \int \Bigl\langle \mathbf{R}_x - \widehat{  \mathbf{R}}_x, f \Bigr\rangle^2_{L_{ \mathrm{P}}^2} \, \mathrm{d}\mathrm{P}(x) \Biggr)^{1/2}.
\end{align*}
According to Lemma 13, 
it is sufficient to show that, 
for any $ x \in \supp( \mathrm{P} )$, $f \in {L}^2_{\mathrm{P}},$
\begin{align*}
\mathcal{  E}_r(x) & \leq \mu(x)^{-1/2} \Bigl( \lVert \chi \rVert_{\infty} \lVert \widehat{  \mathcal{  Q}}^{m-1} - \mathcal{  G}^{m-1} \rVert_{\infty} + \lVert \chi - 1 \rVert_{\infty} \Bigr) \\
\mathcal{  E}_c(f) & \leq \lVert f \rVert_{L_{ \mathrm{P}}^2} \Bigl( \lVert \chi \rVert_{
\infty} \lVert \widehat{  \mathcal{  Q}}^{m-1} - \mathcal{  G}^{m-1} \lVert_{\infty} + \lVert \chi - 1 \rVert_{\infty} \Bigr).
\end{align*}

\vskip2mm
\noindent
In order to prove the first inequality, we observe that 
\begin{align*}
\mathcal{  E}_r(x) & = 
\bigl\lVert \mathcal{  S} \widehat{  \mathcal{  Q}}^{m-1} \phi_{\widehat K}(x) - \mathcal{  S} \mathcal{  G}^{m-1} \phi_{\overline K}(x) \bigr\rVert_{L_{ \mathrm{P}}^2} \\ 
& \leq \bigl\lVert \mathcal{  S} \bigl( \widehat{  \mathcal{  Q}}^{m-1} - \mathcal{  G}^{m-1} \bigr) \phi_{\widehat K}(x) \bigr\rVert_{L_{ \mathrm{P}}^2} 
+ \bigl\lVert \mathcal{  S} \mathcal{  G}^{m-1} \bigl( \phi_{\widehat K}(x) - \phi_{\overline K}(x)  \bigr) \bigr\rVert_{L_{ \mathrm{P}}^2}.  
\end{align*}
Since $\lVert \mathcal{  S} u \rVert^2_{L_{ \mathrm{P}}^2} = \langle \mathcal{  S}^* \mathcal{  S} u, u \rangle_{\mathcal{  H}}= \langle \mathcal{  G} u, u \rangle_{\mathcal{  H}}$, then
$\lVert \mathcal{  S} \rVert_{\infty} = \lVert \mathcal{  G} \rVert^{1/2}_{\infty} = 1$
and hence,
recalling the definition of $\phi_{\widehat K},$ we get
\[
\mathcal{  E}_r(x) 
\leq \lVert \widehat{  \mathcal{  Q}}^{m-1} - \mathcal{  G}^{m-1} \rVert_{\infty} 
\lVert \chi \rVert_{\infty} \lVert \phi_{\overline K}(x) \rVert_{\mathcal{  H}}
+ \lVert \chi - 1 \rVert_{\infty} \lVert \phi_{\overline K}(x) \rVert_{\mathcal{  H}}. 
\]

\vskip1mm
\noindent
We now prove the second bound. Let $\mathbf{\Pi} : L_{ \mathrm{P}}^2 \to L_{ \mathrm{P}}^2$ be 
the orthogonal projector on $\im(\mathbf{G})$. Since, according to ~\cref{R:imG}, almost surely, $\widehat{  \mathbf{R}}_x - \mathbf{R}_x \in \im(\mathbf{G})$,  for any $x \in \mathcal{  X}$, then
\[ 
\Bigl\langle \widehat{  \mathbf{R}}_x - \mathbf{R}_x, f \Bigr\rangle_{L_{ \mathrm{P}}^2} =  
\Bigl\langle \widehat{  \mathbf{R}}_x - \mathbf{R}_x, \mathbf{\Pi}(f) \Bigr\rangle_{L_{ \mathrm{P}}^2} \quad \text{ almost surely.} 
\] 
Moreover, since $\im(\mathbf{G}) = \mathcal{  S} \bigl( \im ( \mathcal{  G} ) \bigr)$, there is $u \in 
\im ( \mathcal{  G} )$ such that $ \mathbf{\Pi}(f) = \mathcal{  S}u$. We can then write
\begin{align*}
\Bigl\langle \widehat{  \mathbf{R}}_x - \mathbf{R}_x, f \Bigr\rangle_{L_{ \mathrm{P}}^2} 
& =  \Bigl\langle \widehat{  \mathbf{R}}_x - \mathbf{R}_x, \mathcal{  S} u \Bigr\rangle_{L_{ \mathrm{P}}^2}  \\ 
& = \Bigl\langle \mathcal{  S} \bigl( \widehat{  \mathcal{  Q}}^{m-1} \phi_{\widehat K}(x)  - \mathcal{  G}^{m-1} \phi_{\overline K}(x) \bigr), \mathcal{  S} u \Bigr\rangle_{L_{ \mathrm{P}}^2} \\
& = \bigl\langle \widehat{  \mathcal{  Q}}^{m-1} \phi_{\widehat K}(x) - \mathcal{  G}^{m-1} \phi_{\overline K}(x), \mathcal{  G} u \bigr\rangle_{\mathcal{  H}} \\ 
& = \bigl\langle \bigl( \widehat{  \mathcal{  Q}}^{m-1} - \mathcal{  G}^{m-1} \bigr) \phi_{\widehat K}(x), \mathcal{  G} u \bigr\rangle_{\mathcal{  H}} 
+ \bigl\langle \mathcal{  G}^{m-1} \bigl( \phi_{\widehat K}(x) - \phi_{\overline K}(x) \bigr), \mathcal{  G} u \bigr\rangle_{\mathcal{  H}}.
\end{align*}
Therefore, similarly as before, we get
\begin{align*}
\mathcal{  E}_c(f) 
& \leq \bigl\lVert { \widehat{ \mathcal G}}^{1/2} \bigl( \widehat{  \mathcal{  Q}}^{m-1} - \mathcal{  G}^{m-1} \bigr) \mathcal{  G} u \rVert_{\mathcal{  H}} + 
\lVert \chi - 1 \rVert_{\infty} \Biggl( \int \bigl\langle  \mathcal{  G}^{m-1} \phi_{\overline K}(x), \mathcal{  G} u \bigr\rangle^2_{\mathcal{  H}} \, \mathrm{d}\mathrm{P}(x) \Biggr)^{1/2} \\
& = \bigl\lVert { \widehat{ \mathcal G}}^{1/2} \bigl( \widehat{  \mathcal{  Q}}^{m-1} - \mathcal{  G}^{m-1} \bigr) \mathcal{  G} u \rVert_{\mathcal{  H}} + \lVert \chi - 1 \rVert_{\infty} \bigl\lVert \mathcal{  G}^{m+1/2} u \bigr\rVert_{\mathcal{  H}}\\ 
& \leq \lVert \chi \rVert_{\infty} \lVert \widehat{  \mathcal{  Q}}^{m-1} - \mathcal{  G}^{m-1} \rVert_{\infty} 
\lVert \mathcal{  G}^{1/2} u \rVert_{\mathcal{  H}} + \lVert \chi - 1 \rVert_{\infty} \lVert \mathcal{  G}^{1/2} u \rVert_{\mathcal{  H}}.
\end{align*}
We conclude observing that 
\[
\lVert \mathcal{  G}^{1/2} u \rVert_{\mathcal{  H}} =\langle \mathcal{  S} u, \mathcal{  S} u \rangle_{L_{ \mathrm{P}}^2}  = \lVert \mathbf{\Pi}(f) \rVert_{L_{ \mathrm{P}}^2} \leq \lVert f \rVert_{L_{ \mathrm{P}}^2}.
\]

\subsection{Proof of Proposition 10}\label{pf2} 

Observe that 
\[
\lVert \widehat{  \mathcal  Q} - \mathcal{  G} \rVert_{\infty} 
\leq \lVert \widehat {  \mathcal  Q} - \overline{\mathcal G}  \rVert_{\infty} +  \lVert \overline{\mathcal G}  - \mathcal{  G} \rVert_{\infty}.
\]
Moreover, for any $u \in \mathcal H$, such that $\| u\|_{\mathcal H} =1,$ recalling that $\phi_{\overline K} (x)  = \mu(x)^{-1/2} \phi_{K}(x),$
\begin{align*}
\langle  \widehat {  \mathcal  Q} u, u \rangle_{\mathcal H} - \langle  \overline{\mathcal G}  u, u \rangle_{\mathcal H}
& = \frac{1}{n} \sum_{i=1}^n \left( \widehat \mu(X_i)^{-1} -  \mu(X_i)^{-1} \right) \langle u , \phi_K (X_i) \rangle_{\mathcal H}^2\\
& = \frac{1}{n} \sum_{i=1}^n \mu(X_i)^{-1} \left(  \chi(X_i)^2 - 1\right)  \langle u , \phi_K (X_i) \rangle_{\mathcal H}^2.
\end{align*}
Thus
\begin{align*}
 \lVert \widehat {  \mathcal  Q} - \overline{\mathcal G}  \rVert_{\infty} 
 & \leq \| \chi^2 - 1\|_{\infty} \sup_{\| u\|_{\mathcal H} =1}  \frac{1}{n} \sum_{i=1}^n \mu(X_i)^{-1} \langle u , \phi_K (X_i) \rangle_{\mathcal H}^2\\
 & = \| \chi^2 - 1\|_{\infty} \sup_{\| u\|_{\mathcal H} =1}  \frac{1}{n} \sum_{i=1}^n \langle u , \phi_{\overline K} (X_i) \rangle_{\mathcal H}^2
 = \| \chi^2 - 1\|_{\infty}  \| \overline{\mathcal G} \|_{\infty}.
 \end{align*}
Using the fact that $\| \mathcal G\|_{\infty} =1$ we conclude that 
\[
\lVert \widehat {  \mathcal  Q} - \overline{\mathcal G}  \rVert_{\infty} 
\leq  \| \chi^2 - 1\|_{\infty} \left(1 + \|\overline{\mathcal G}  - \mathcal G \|_{\infty}\right),
\]
which proves the proposition.


%

\newpage
\bibliographystyle{plain} 
\bibliography{bibliothese}

\begin{thebibliography}{10}

\bibitem{Mallat_Anden}
Joakim And\'en and St\'ephane Mallat.
\newblock Multiscale scattering for audio classification.
\newblock In {\em Proceedings of the 12th International Society for Music
  Information Retrieval Conference}, pages 657--662, Miami (Florida), USA,
  October 24-28 2011.

\bibitem{BNy}
Mikhail Belkin and Partha Niyogi.
\newblock Laplacian eigenmaps for dimensionality reduction and data
  representation.
\newblock {\em Neural Comput.}, 15(6):1373--1396, June 2003.

\bibitem{Mallat_Bruna}
Joan Bruna and Stephane Mallat.
\newblock Invariant scattering convolution networks.
\newblock {\em IEEE Trans. Pattern Anal. Mach. Intell.}, 35(8):1872--1886,
  August 2013.

\bibitem{Catoni01}
Olivier Catoni.
\newblock {\em Statistical learning theory and stochastic optimization}, volume
  1851 of {\em Lecture Notes in Mathematics}.
\newblock Springer-Verlag, Berlin, 2004.
\newblock Lecture notes from the 31st Summer School on Probability Theory held
  in Saint-Flour, July 8--25, 2001.

\bibitem{CSZ}
Olivier Chapelle, Bernhard Schlkopf, and Alexander Zien.
\newblock {\em Semi-Supervised Learning}.
\newblock The MIT Press, 1st edition, 2010.

\bibitem{CSTK_cl}
Nello Cristianini, John Shawe{-}Taylor, and Jaz~S. Kandola.
\newblock Spectral kernel methods for clustering.
\newblock In {\em Advances in Neural Information Processing Systems 14 [Neural
  Information Processing Systems: Natural and Synthetic, {NIPS} 2001, December
  3-8, 2001, Vancouver, British Columbia, Canada]}, pages 649--655, 2001.

\bibitem{DonathHoffman}
W.~E. Donath and A.~J. Hoffman.
\newblock Lower bounds for the partitioning of graphs.
\newblock {\em IBM J. Res. Develop.}, 17:420--425, 1973.

\bibitem{MFied}
Miroslav Fiedler.
\newblock A property of eigenvectors of nonnegative symmetric matrices and its
  application to graph theory.
\newblock {\em Czechoslovak Math. J.}, 25(100)(4):619--633, 1975.

\bibitem{tesi}
Ilaria Giulini.
\newblock Generalization bounds for random samples in hilbert spaces.
\newblock {\em PhD thesis}.

\bibitem{giulini15}
Ilaria Giulini.
\newblock Robust dimension-free gram operator estimates.
\newblock {\em preprint arXiv:1511.06259}.

\bibitem{BJ}
Michael~I. Jordan and Francis~R. Bach.
\newblock Learning spectral clustering.
\newblock In {\em Advances in Neural Information Processing Systems 16}. MIT
  Press, 2003.

\bibitem{vL}
Ulrike Luxburg.
\newblock A tutorial on spectral clustering.
\newblock {\em Statistics and Computing}, 17(4):395--416, December 2007.

\bibitem{Mallat}
St{\'e}phane Mallat.
\newblock Group invariant scattering.
\newblock {\em Comm. Pure Appl. Math.}, 65(10):1331--1398, 2012.

\bibitem{MeS}
Marina Meila and Jianbo Shi.
\newblock Learning segmentation by random walks.
\newblock In {\em In Advances in Neural Information Processing Systems}, pages
  873--879. MIT Press, 2001.

\bibitem{NJW}
Andrew~Y. Ng, Michael~I. Jordan, and Yair Weiss.
\newblock On spectral clustering: Analysis and an algorithm.
\newblock In {\em ADVANCES IN NEURAL INFORMATION PROCESSING SYSTEMS}, pages
  849--856. MIT Press, 2001.

\bibitem{ShiMalik}
Jianbo Shi and Jitendra Malik.
\newblock Normalized cuts and image segmentation.
\newblock {\em IEEE Trans. Pattern Anal. Mach. Intell.}, 22(8):888--905, August
  2000.

\bibitem{vLBB}
Ulrike von Luxburg, Mikhail Belkin, and Olivier Bousquet.
\newblock Consistency of spectral clustering.
\newblock {\em Ann. Statist.}, 36(2):555--586, 2008.

\end{thebibliography}

\end{document}